\newcommand{\ds}{\displaystyle}
\theoremstyle{plain}
\def \[{\begin{equation}}
\def \]{\end{equation}}
\newtheorem{thm}{Theorem}[section]
\newtheorem{lem}[thm]{Lemma}
\newtheorem{rem}[thm]{Remark}
\newif \ifLastSection \LastSectionfalse
\numberwithin{equation}{section}
\newcommand{\R}{{\mathbb R}}
\begin{document}
\title{   Existence of positive solutions for nonlinear Kirchhoff type problems in
$\R^{3}$ with critical Sobolev exponent and sign-changing nonlinearities
\thanks {a: Partially  supported  by  NSFC No: 11071095 and  Hubei Key Laboratory of Mathematical
Sciences.  }
\thanks{b:Corresponding author: Gongbao Li, School of Mathematics and
Statistics, Central China  Normal University,  Wuhan, 430079, China.
E-mail address: ligb@mail.ccnu.edu.cn}}
\author{Gongbao Li ,\,\,\,\,\,Hongyu Ye \\ \small {School of Mathematics and Statistics,  Central China Normal
University,}
\\
\small{ Wuhan 430079, P. R. China}}
\date{}
\maketitle

\begin{abstract}
In this paper, we study the following nonlinear problem of Kirchhoff
type with critical Sobolev exponent:
$$
\left\{%
\begin{array}{ll}
    -\left(a+b\ds\int_{\R^3}|D u|^2\right)\Delta u+u=f(x,u)+u^{5}, & \hbox{$x\in \R^3$}, \\
    u\in H^1(\R^3),~~~~u>0, & \hbox{$x\in \R^3$},\\
\end{array}%
\right.$$where $a,$ $b>0$ are constants.
Under certain assumptions on the sign-changing function $f(x,u)$, we prove
the existence of positive solutions by variational methods.

Our main results can be viewed as a partial extension of a recent
result of He and Zou in \cite{hz} concerning the existence of
positive solutions to the nonlinear Kirchhoff problem
$$\left\{%
\begin{array}{ll}
    -\left(\varepsilon^2a+\varepsilon b\ds\int_{\R^3}|D u|^2\right)\Delta u+V(x)u=f(u), & \hbox{$x\in \R^3$}, \\
    u\in H^1(\R^3),~~~~u>0, & \hbox{$x\in \R^3$},\\
\end{array}%
\right.$$where $\varepsilon>0$ is a parameter, $V(x)$ is a positive
continuous potential and $f(u)\thicksim |u|^{p-2}u$ with $4<p<6$ and
satisfies the Ambrosetti-Rabinowitz type condition.
\end{abstract}

{\bf Keywords:} \,\, Kirchhoff equation; Critical growth; Sign-changing weight; Variational methods.
\section{ Introduction and main result}
In this paper, we consider the existence of positive solutions to
the following nonlinear Kirchhoff type problem with critical Sobolev
exponent:
\begin{equation}\label{1.1}
\left\{%
\begin{array}{ll}
    -\left(a+b\ds\int_{\R^3}|D u|^2\right)\Delta u+u=f(x,u)+u^{5}, & \hbox{$x\in \R^3$}, \\
    u\in H^1(\R^3),~~~~u>0, & \hbox{$x\in \R^3$},\\
\end{array}%
\right.\end{equation} where $a,$ $b>0$ are two constants and
$f(x,t):\R^3\times\R\rightarrow\R$ satisfies the following two kinds of
conditions:

\noindent(A)~~$f(x,t)=f(t)$. $f\in C(\R)$ satisfies:

$(f_1)$~~~$\lim\limits_{|t|\rightarrow0}\frac{f(t)}{t^3}=0$ and $f(t)\equiv0$ for all
$t\leq0$;

$(f_2)$~~~$\frac{f(t)}{t^3}$ is strictly increasing for $t>0$;

$(f_3)$~~~$\lim\limits_{|t|\rightarrow+\infty}\frac{f(t)}{t^5}=0$;

$(f_4)$~~~there exists a $\mu>4$ such that $$0<\mu
F(t)\leq f(t)t~~\hbox{for~all}~t>0,$$ where $F(t)=\int_{0}^tf(s)ds$.

\noindent(B)~~$f(x,t)=f_\lambda(x)|t|^{p-2}t$, where $2\leq p<4$,
$\lambda>0$ is a parameter,
$$f_\lambda(x)=\lambda f^+(x)-f^-(x)$$
is a
sign-changing weight function and
$$\Sigma\triangleq\{x\in\R^3|~f^+(x)\neq0\}~~\hbox{is~a~nonempty~domain},$$
where $f\in
L^{p_*}(\R^3),$ $p_*=\frac{6}{6-p}$ and $f^\pm(x)=\max\{\pm f(x),0\}$.

In recent years, the following elliptic problem
\begin{equation}\label{1.2}
-\left(a+b\ds\int_{\R^N}|D u|^2\right)\Delta u+V(x)u=f(x,u),~~u\in H^1(\R^N) \\
\end{equation} has been extensively studied by many
researchers, where $V:\R^N\rightarrow\R$, $f\in C(\R^N\times\R,\R)$,
$N=1,2,3$ and $a,b>0$ are constants. \eqref{1.2} is a nonlocal
problem as the appearance of the term  $\int_{\R^N}|D u|^2$ implies
that \eqref{1.2} is not a pointwise identity. This causes some
mathematical difficulties which make the study of \eqref{1.2}
particularly interesting. Problem \eqref{1.2} arises in an
interesting physical context. Indeed, if we set $V(x)=0$ and replace
$\R^N$ by a bounded domain $\Omega\subset\R^N$ in \eqref{1.2}, then
we get the following Kirchhoff Dirichlet problem
\begin{equation}\label{1.3}\left\{%
\begin{array}{ll}
    -\left(a+b\ds\int_{\Omega}|D u|^2\right)\Delta u=f(x,u), & \hbox{$x\in\Omega$}, \\
    u=0, & \hbox{$x\in \partial\Omega$},\\
\end{array}%
\right.\end{equation} which is related to the stationary analogue of
the equation
$$
\rho\frac{\partial^2u}{\partial
t^2}-\left(\frac{P_0}{h}+\frac{E}{2L}\ds\int_0^L\left|\frac{\partial
u}{\partial x}\right|^2dx\right)\frac{\partial^2 u}{\partial
x^2}=0$$ presented by Kirchhoff in \cite{k}. The readers can learn
some early research of Kirchhoff equations from \cite{b,p}. In
\cite{l}, J.L. Lions introduced an abstract functional analysis
framework to the following equation
\begin{equation}\label{1.4}
u_{tt}-\left(a+b\ds\int_{\Omega}|D u|^2\right)\Delta
u=f(x,u).\end{equation}After that, \eqref{1.4} received much
attention, see \cite{ac,af,ap1,cd,ds} and the references therein.

Before we review some results about \eqref{1.2}, we give several notations and
definitions.

Throughout this paper, we use standard notations. For simplicity, we
write $\int_{\Omega} h$ to mean the Lebesgue integral of $h(x)$ over
a domain $\Omega\subset\R^3$. $L^{p}\triangleq L^{p}(\R^{3})~(1\leq
p<+\infty)$ is the usual Lebesgue space with the standard norm
$|u|_{p}.$ We use $\rightarrow$ and $\rightharpoonup$ to denote the
strong and weak convergence in the related function space
respectively. For $x\in\R^3$, $B_r(x)\triangleq\{y\in\R^3|~|x-y|<r\}$. $C$ will
denote a positive constant unless specified. We denote a subsequence
of a sequence $\{u_n\}$ as $\{u_n\}$ to simplify the notation unless
specified.

Let $(X,\parallel\cdot\parallel)$ be a Banach space with its dual
space $(X^*,\parallel\cdot\parallel_*)$, $I\in C^1(X,\R)$ and
$c\in\R$. We say a sequence $\{x_n\}$ in $X$ a Palais-Smale sequence
at level $c$ ($(PS)_{c}$ sequence in short) if $I(x_{n})\rightarrow
c$ and $\parallel I'(x_{n})\parallel_*\rightarrow 0$ as
$n\rightarrow\infty.$ We say that $I$ satisfies $(PS)_c$ condition
if for any $(PS)_c$ sequence $\{x_n\}$ in $X$, there exists a
subsequence $\{x_{n_k}\}$ such that $x_{n_k}\rightarrow x_0$ in $X$
for some $x_0\in X$.

There have been many works about the existence of nontrivial
solutions to \eqref{1.2} with subcritical nonlinearities by using
variational methods, see e.g. \cite{hz,jw,lls,lh,w}. A typical way
to deal with \eqref{1.2} with subcritical nonlinearities is to use
the mountain-pass theorem. For this purpose, one usually assumes
that $f(x,u)$ is subcritical, superlinear at the origin and either
4-superlinear at infinity in the sense that
$\lim\limits_{|u|\rightarrow+\infty}\frac{F(x,u)}{u^4}=+\infty~$
uniformly in $x\in\R^N$ or satisfies the Ambrosetti-Rabinowitz type
condition ($(AR)$ in short):
$$(AR)~~~~~~~~~~~~\exists~\mu>4~~\hbox{such~that}~~~~0<\mu~F(x,u)\leq f(x,u)u~~~~\hbox{for~all~}u\neq0,~~~~~~~~~~~~~~~~~~~~$$
where $F(x,u)=\int_0^u f(x,t)dt$. Under these mentioned conditions,
one easily sees that the functional corresponding to \eqref{1.2}
possesses a mountain-pass geometry around $0\in H^1(\R^N)$ and then
by the mountain-pass theorem, one can get a $(PS)$ sequence.
Moreover, the $(PS)$ sequence is bounded in $H^1(\R^N)$ if $$(F)~~~~~~~~~~~~~~~~~~~~~~~~~~~~~~~~~~~~4F(x,u)\leq f(x,u)u~~\hbox{
for~all}~u\in\R.~~~~~~~~~~~~~~~~~~~~~~~~~~~~~~~~~~~~~~~~~$$ Therefore, one can show that \eqref{1.2} has
at least one nontrivial solution provided some further conditions on
$f(x,u)$ and $V(x)$ are assumed to guarantee that the $(PS)$
condition holds.

After Brezis and Nirenberg in \cite{bn} first studied a critical
growth problem in a bounded domain:
$$\left\{%
\begin{array}{ll}
    -\Delta u=|u|^{2^*-2}u+\lambda u, & \hbox{$x\in\Omega$}, \\
    u=0, & \hbox{$x\in \partial\Omega$},\\
\end{array}%
\right.
$$
many researchers have considered
problems with critical Sobolev exponent by either pulling the energy level down below some
critical energy level to recover certain
compactness or using a combination of the idea above with the
concentration compactness principle of P.L. Lions \cite{pl} and proved the existence of nontrivial solutions. An interesting
question now is whether the same existence results occur to the
nonlocal problem \eqref{1.2} with critical Sobolev exponent.
To the best of our knowledge, there are few papers about the
non-existence and existence results to \eqref{1.2} when $f(x,u)$
exhibits a critical growth. In fact, by the Poho\u{z}ave identity, the following problem
\begin{equation}\label{1.5}
-\left(a+b\ds\int_{\R^3}|Du|^2\right)\Delta u+u=u^5,~~u\in
H^1(\R^3),
\end{equation}
has no
nontrivial solution (see Theorem 1.1 below). Therefore, in the spirit of \cite{bn}, one usually adds a lower order
perturbation to the right-hand side of equation \eqref{1.5}. In
\cite{af2}, Alves and Figueiredo studied the following class of
Kirchhoff problem
\begin{equation}\label{1.6}
M\left(\ds\int_{\R^N}(|Du|^2+V(x)u^2)\right)[-\Delta
u+V(x)u]=\lambda f(u)+ \gamma u^\tau~~\hbox{in}~\R^N,
\end{equation}
where $\tau=5$ for $N=3$ and $\tau\in(1,+\infty)$ for $N=1,2$;
$\lambda>0$ and $\gamma\in\{0,1\}$, i.e. they considered both subcritical
and critical cases. When $\gamma=1$, they proved under certain
conditions on functions $M(t)$, $V(x)$ and $f(u)$, then there exists a
$\lambda_*>0$ such that problem \eqref{1.6} has at least a positive solution
for all $\lambda\geq\lambda_*$. Recently, in \cite{wt}, Wang et al. proved the existence and multiplicity of positive ground state
solutions for the following Kirchhoff problem with critical growth
\begin{equation}\label{1.7}
\left\{%
\begin{array}{ll}
    -\left(\varepsilon^2a+\varepsilon b\ds\int_{\R^3}|Du|^2\right)\Delta u+V(x)u=\lambda f(u)+u^5, & \hbox{$x\in \R^3$}, \\
    u\in H^1(\R^3),~~~~u>0, & \hbox{$x\in \R^3$},\\
\end{array}%
\right. \end{equation} where $V(x)$ satisfies that
$V_\infty=\liminf\limits_{|x|\rightarrow\infty}V(x)>V_0=\inf\limits_{x\in\R^3}V(x)>0$ and
the nonlinearity $f\in C(\R^3)$ satisfies $(f_1)-(f_3)$. By using the
Nehari manifold and pulling the energy level down below the
following critical level:
\begin{equation}\label{1.14}
c_1\triangleq\frac{1}{3}(aS)^{\frac{3}{2}}+\frac{1}{12}b^3S^6,
\end{equation}
where $S=\inf\limits_{D^{1,2}(\R^3)\backslash\{0\}}\frac{\int_{\R^3}|Du|^2}{|u|_6^2}$,
they showed that there exist $\varepsilon^*>0$, $\lambda^*>0$ such
that for any $\varepsilon\in (0,\varepsilon^*)$ and
$\lambda\in[\lambda^*,\infty)$, problem \eqref{1.7} has at least one positive
ground state solution in $H^1(\R^3)$.

Motivated by the works described before, in this paper, we try
to prove the existence of positive solutions for problem \eqref{1.1} with $f(x,t)$ satisfying cases (A) and (B) respectively. To state our main results, for fixed $a>0$, we introduce an
equivalent norm on $H^1(\R^3)$: the norm of $u\in H^1(\R^3)$ is
defined as
$$\|u\|=\left(\ds\int_{\R^3}a|Du|^2+|u|^2\right)^{\frac{1}{2}},$$which
is induced by the corresponding inner product on $H^1(\R^3)$. Clearly, weak solutions to \eqref{1.1} correspond to critical points
of the following functionals
\begin{equation}\label{1.8}
I(u)=\frac{1}{2}\ds\int_{\R^3}(a|Du|^2+|u|^2)+\frac{b}{4}\left(\ds\int_{\R^3}|Du|^2\right)^2-\ds\int_{\R^3}F(u)-\frac{1}{6}\ds\int_{\R^3}|u|^{6}
\end{equation}
and
\begin{equation}\label{1.9}
I_{f_\lambda}(u)=\frac{1}{2}\ds\int_{\R^3}(a|Du|^2+|u|^2)+\frac{b}{4}\left(\ds\int_{\R^3}|Du|^2\right)^2-\frac{1}{p}\ds\int_{\R^3}f_\lambda(x)|u|^{p}-\frac{1}{6}\ds\int_{\R^3}|u|^{6}.
\end{equation}
 We see that
$I(u),~I_{f_\lambda}(u)\in C^1(H^1(\R^3),\R)$. We
say a nontrivial solution $u\in H^1(\R^3)$ to \eqref{1.1} a ground
state solution if $I(u)\leq I(w)$ for any nontrivial solution $w\in H^1(\R^3)$ to \eqref{1.1}. \\

Based on the Poho\u{z}ave identity, we have the following
non-existence result:

\begin{thm}\label{thm1.1}\ \
For any $a,$ $b>0$, problem \eqref{1.5} has no nontrivial solution in $H^1(\R^3)$.
\end{thm}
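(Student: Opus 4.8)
The plan is to derive a contradiction from two integral identities that every nontrivial solution must satisfy: the Nehari identity, obtained by pairing the equation with $u$, and the Poho\u{z}ave identity, obtained by pairing with the dilation field $x\cdot Du$. Fix a nontrivial $u\in H^1(\R^3)$ solving \eqref{1.5} and abbreviate $A=\int_{\R^3}|Du|^2$, $B=\int_{\R^3}u^2$ and $C=\int_{\R^3}|u|^6$. The key structural remark is that the Kirchhoff coefficient $a+bA$ is now just a fixed positive number, so $u$ is a weak solution of the \emph{local} critical equation $-(a+bA)\Delta u+u=u^5$ in $\R^3$.

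First I would record the Nehari identity: testing \eqref{1.5} with $u$ gives $(a+bA)A+B=C$, i.e. $aA+bA^2+B=C$. The second ingredient is the Poho\u{z}ave identity $aA+bA^2+3B=C$. The cleanest way to read off the coefficients is through the dilation $u_t(x)=u(x/t)$, $t>0$: in $\R^3$ one has $\int_{\R^3}|Du_t|^2=tA$, $\int_{\R^3}u_t^2=t^3B$ and $\int_{\R^3}|u_t|^6=t^3C$, so the energy of \eqref{1.5} along this path is $\frac a2 tA+\frac b4 t^2A^2+\frac12 t^3B-\frac16 t^3C$; since $u$ is a critical point of that energy, its derivative at $t=1$ vanishes and yields exactly $aA+bA^2+3B=C$. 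Subtracting the Nehari identity from this gives $2B=0$, that is $\int_{\R^3}u^2=0$, which forces $u\equiv0$ and contradicts nontriviality.

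I expect the genuine work to lie in justifying the Poho\u{z}ave identity rigorously, since $x\cdot Du$ is not an admissible test function for a mere $H^1$ solution and the formal integration by parts creates boundary terms at infinity. To handle this I would first bootstrap regularity: because $a+bA>0$ is a constant and $u^5-u$ has critical growth, a Brezis--Kato type $L^p$-argument gives $u\in C^2(\R^3)$ together with the decay $u(x),|Du(x)|\to0$ as $|x|\to\infty$ needed to control the tails. I would then carry out the computation rigorously against the truncated multiplier $\eta_R(x)\,(x\cdot Du)$, where $\eta_R$ is a standard cutoff supported in the ball $B_{2R}(0)$, integrate by parts, and let $R\to\infty$; the decay estimates kill both the boundary contribution and the error coming from $\nabla\eta_R$, leaving the identity $(a+bA)A+3B=C$.

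Once this is in place the contradiction $B=0$ is immediate, proving that \eqref{1.5} has no nontrivial solution for any $a,b>0$. The only delicate point worth flagging is that the critical exponent makes the nonlinearity barely integrable against the regularity/decay one obtains, so I would be careful that the bootstrap and the tail estimates for $\nabla\eta_R\,(x\cdot Du)$ genuinely control the critical term $\int u^6$ rather than merely the subcritical pieces.
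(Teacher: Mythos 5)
Your proposal is correct and follows essentially the same route as the paper: you observe that the Kirchhoff coefficient $a+b\int_{\R^3}|Du|^2$ is a fixed constant so that $u$ solves a local critical equation, derive the Poho\u{z}aev identity $a A+bA^2+3B=C$ (the paper's Lemma 2.1, which it also justifies by reducing to this local equation and invoking the standard multiplier/regularity argument you sketch), and subtract the Nehari identity to get $\int_{\R^3}|u|^2=0$. The identities and the final contradiction match the paper's proof exactly.
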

\begin{rem}\label{rem1.2}\ \
We can extend Theorem 1.1 to the
non-constant potential case, i.e. the following problem
$$
-\left(a+b\ds\int_{\R^3}|Du|^2\right)\Delta u+V(x)u=u^5,~~u\in
H^1(\R^3)
$$
has no nontrivial solution in $H^1(\R^3)$ if $V(x)\in C^1(\R^3,\R)$,
there exist $C_1,~C_2>0$ such that $0< C_1\leq V(x)\leq C_2$ for all
$x\in\R^3$ and
$$2V(x)+(DV(x),x)\geq0~~\hbox{for}~a.e.~x\in\R^3.$$
\end{rem}

Our main results are as follows:
\begin{thm}\label{thm1.3}\ \
Suppose that (A) holds, then \eqref{1.1} has at least a positive ground state solution in $H^1(\R^3)$.
\end{thm}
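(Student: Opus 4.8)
The plan is to realize the positive ground state as a minimizer of $I$ over its Nehari manifold, using a mountain-pass scheme combined with a concentration-compactness analysis to overcome the loss of compactness caused by the critical term $u^5$. Since we seek a positive solution and $f(t)\equiv0$ for $t\le0$, I would first replace $u^5$ by $(u^+)^5$ and $F$ by its restriction to $\{t\ge0\}$; testing the resulting functional at a critical point $u$ with $u^-=\min(u,0)$ makes every nonlinear term vanish and leaves $\|u^-\|^2+b\big(\int_{\R^3}|Du|^2\big)\big(\int_{\R^3}|Du^-|^2\big)=0$, so $u\ge0$, and the strong maximum principle gives $u>0$, whence $u$ solves \eqref{1.1}. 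The first real step is to check that $I$ has the mountain-pass geometry: $(f_1)$ and $(f_3)$ yield $F(t)=o(t^4)$ near $0$ together with subcritical control, so the quadratic part dominates and $I(u)\ge\alpha>0$ on a small sphere $\|u\|=\rho$, while $-\tfrac16\int_{\R^3}|u|^6$ forces $I(tu)\to-\infty$ along any ray with $u^+\not\equiv0$; the mountain-pass theorem then produces a $(PS)_c$ sequence at the minimax level $c$.

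Next I would show that every $(PS)_c$ sequence is bounded. Since $(f_4)$ with some $\mu>4$ also holds with any $\mu'\in(4,\mu)$, I may assume $4<\mu<6$, and then the coefficients $\tfrac12-\tfrac1\mu$, $\tfrac14-\tfrac1\mu$, $\tfrac1\mu-\tfrac16$ are all positive while $\tfrac1\mu f(t)t-F(t)\ge0$, so $I(u_n)-\tfrac1\mu\langle I'(u_n),u_n\rangle\ge(\tfrac12-\tfrac1\mu)\|u_n\|^2$ gives $(\tfrac12-\tfrac1\mu)\|u_n\|^2\le c+1+o(1)\|u_n\|$ and hence boundedness. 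Passing to a subsequence, $u_n\rightharpoonup u$ in $H^1(\R^3)$, with $u_n\to u$ in $L^q_{loc}$ for $q<6$ and a.e.; the only delicate point in identifying $u$ as a critical point is the nonlocal coefficient, so I would record that $\int_{\R^3}|Du_n|^2\to A\ge\int_{\R^3}|Du|^2$ along the subsequence and verify that $u$ solves the limiting equation with $a$ replaced by $a+bA$.

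The heart of the argument, and the main obstacle, is to recover compactness below an explicit threshold $c^*$ of the form appearing in \eqref{1.14} and to prove the strict inequality $c<c^*$. For the compactness part I would apply a Brezis--Lieb decomposition together with the concentration-compactness principle to the bounded $(PS)_c$ sequence: if $u_n\not\to u$ strongly, the escaping mass must concentrate into standard Aubin--Talenti bubbles, each contributing at least $c^*$ to the energy, contradicting $c<c^*$; hence $u_n\to u$ strongly and $u$ is a nontrivial critical point. The genuinely technical step is $c<c^*$: I would insert a truncated Talenti instanton $u_\va$ and expand $\max_{t\ge0}I(tu_\va)$, where the loss from $+\tfrac12\int_{\R^3}u^2$ is of order $\va$ while the gain from $-\int_{\R^3}F(tu_\va)$ is controlled below by $\int_{\R^3}(tu_\va)^\mu\sim\va^{\,3-\mu/2}$; since $\mu>4$ forces $3-\mu/2<1$, the gain dominates the loss as $\va\to0$, which is exactly where the hypothesis $\mu>4$ is indispensable. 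Matching these $\va$-expansions so as to strictly beat $c^*$ in dimension three is the calculation I expect to require the most care.

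Finally, I would promote the critical point to a ground state through the Nehari manifold $\mathcal{N}=\{u\ne0:\langle I'(u),u\rangle=0\}$. The monotonicity hypothesis $(f_2)$ ensures that for each $u$ with $u^+\not\equiv0$ the fibering map $t\mapsto I(tu)$ has a unique positive critical point $t_u$, which is its global maximum, so each such ray meets $\mathcal{N}$ exactly once; a standard minimax comparison then yields $c=\inf_{\mathcal{N}}I$. Because the solution produced above lies on $\mathcal{N}$ and realizes $c$, it is a least-energy nontrivial solution, and combined with the positivity obtained from the truncation and the maximum principle it is the desired positive ground state of \eqref{1.1}.
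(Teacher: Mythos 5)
Your overall scaffolding (mountain pass geometry, boundedness via the $\mu$-quotient, truncation plus maximum principle for positivity, instanton expansion for the strict energy inequality, Nehari characterization of the ground state) is sound and largely parallels the paper; the Nehari-manifold ending is a legitimate alternative to the paper's minimization over the set $M$ of nontrivial critical points. But the compactness step, which you correctly identify as the heart of the matter, has a genuine gap. On $\R^3$ the failure of strong convergence of a bounded $(PS)_c$ sequence is \emph{not} caused only by Aubin--Talenti bubbles: since the equation is autonomous, the other mechanism is translation of a fixed profile to infinity. Concretely, if $w$ were the sought solution and $|y_n|\to\infty$, then $u_n=w(\cdot-y_n)$ is a $(PS)_c$ sequence that converges weakly to $0$, carries no concentration bubbles, and does not converge strongly; so the dichotomy ``not strongly convergent $\Rightarrow$ bubbles costing at least $c^*$, contradiction'' is false as stated, and $c<c^*$ alone yields nothing. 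What $c<c^*$ actually excludes is \emph{vanishing}: this is the paper's Lemma 3.7, where a vanishing $(PS)$ sequence is shown, via Lions' lemma, to be asymptotically a pure critical-power sequence whose level is quantized at $\ge c^*$ through the algebraic system of Lemma 3.6. One must then \emph{recenter}, $v_n=u_n(\cdot+y_n)$, using translation invariance of $I$, to get a nontrivial weak limit $v$, and then still show that $v$ is a critical point of $I$ itself (not merely of the frozen functional with coefficient $a+bA^2$) and that the convergence is strong; the paper does this with the $(f_2)$-monotonicity trick of Lemma 3.2(i), which forces $\int_{\R^3}|Dv_n|^2\to\int_{\R^3}|Dv|^2$. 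Your proposal omits both the recentering and this identification: recording that the weak limit solves the equation with $a+bA$ gives, by itself, neither $I'(u)=0$ nor $u\not\equiv0$.

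A second, related inaccuracy concerns the threshold. It is not ``of the form appearing in \eqref{1.14}''; the correct constant is the strictly larger $c^*$ of \eqref{1.10}. Because of the nonlocal term $b\bigl(\int_{\R^3}|Du|^2\bigr)^2$, the cost of a lost bubble is governed by the coupled system of Lemma 3.6, in which the $a$-part and the $b$-part interact through $(t+s)^{1/3}$ and $(t+s)^{2/3}$, not by the decoupled computation that produces $c_1=\frac13(aS)^{3/2}+\frac1{12}b^3S^6$ in \eqref{1.14}. This matters on both sides of your argument: your instanton expansion of $\max_{t\ge0}I(tu_\va)$ converges to the value in \eqref{1.10}, not to $c_1$, so it can never establish $c<c_1$; and conversely the bubble-cost (quantization) estimate must be proved at the level \eqref{1.10} with the nonlocal cross-terms included. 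Once you use \eqref{1.10} consistently and prove the quantization as in Lemmas 3.6--3.7, your $\va$-expansion ($O(\va)$ loss versus $\va^{(6-\mu)/2}$ gain, $\mu>4$) matches the paper's Lemma 3.3 and the rest of your plan goes through.
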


\begin{thm}\label{thm1.4}\ \
Suppose that (B) holds for $p\in[2,4)$, then there
exists $\lambda^*>0$ such that for all $\lambda\in(0,\lambda^*)$, \eqref{1.1} has at least a positive solution in $H^1(\R^3)$.
\end{thm}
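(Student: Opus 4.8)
The plan is to produce the positive solution as a mountain-pass critical point of the functional $I_{f_\lambda}$ in \eqref{1.9}, keeping $\lambda$ small so that $I_{f_\lambda}$ retains a mountain-pass geometry and, crucially, so that its minimax level can be placed below the critical energy level $c_1$ of \eqref{1.14} at which compactness is lost. For the geometry, H\"older's inequality with exponents $p_*=\frac{6}{6-p}$ and $\frac{6}{p}$ together with the Sobolev embedding give $\int_{\R^3}f_\lambda|u|^p\le\lambda|f^+|_{p_*}|u|_6^p\le C\lambda\|u\|^p$, so that $I_{f_\lambda}(u)\ge\frac12\|u\|^2-C\lambda\|u\|^p-C\|u\|^6$ near the origin. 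Since $p\ge2$ (and, when $p=2$, $\lambda$ small), this is bounded below by some $\alpha>0$ on a small sphere $\|u\|=\rho$, while the critical term forces $I_{f_\lambda}(t\phi)\to-\infty$ as $t\to\infty$ for any fixed $\phi$; choosing $\phi$ supported in $\Sigma$ produces an $e$ with $I_{f_\lambda}(e)<0$. This gives a mountain-pass value $c_\lambda\ge\alpha>0$ and an associated $(PS)_{c_\lambda}$ sequence $\{u_n\}$.

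First I would show every $(PS)_{c_\lambda}$ sequence is bounded: from $I_{f_\lambda}(u_n)-\frac14\langle I_{f_\lambda}'(u_n),u_n\rangle=\frac14\|u_n\|^2-(\frac1p-\frac14)\int_{\R^3}f_\lambda|u_n|^p+\frac1{12}|u_n|_6^6$, using $p<4$ so that $\frac1p-\frac14>0$ and absorbing the weight term into a small multiple of $\frac1{12}|u_n|_6^6$ (possible since $p<6$), one obtains $\frac14\|u_n\|^2\le C+c_\lambda+o(\|u_n\|)$. Passing to a subsequence, $u_n\rightharpoonup u$ and $|Du_n|_2^2\to A_0$, so the nonlocal coefficient converges and $u$ weakly solves \eqref{1.1} with coefficient $a+bA_0$. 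To recover compactness I would set $v_n=u_n-u\rightharpoonup0$ and apply the Brezis--Lieb lemma to $|u_n|_6^6$ and to the Kirchhoff term $|Du_n|_2^4$: a nonvanishing concentration $\ell=\lim|Dv_n|_2^2>0$ must carry energy at least $c_1$ (the nonlocal cross term $\frac b2|Du|_2^2|Dv_n|_2^2$ only increasing this cost), which would force $c_\lambda\ge c_1$. Hence $c_\lambda<c_1$ yields $\ell=0$ and $u_n\to u$ strongly, so $u$ is a nontrivial critical point.

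The genuinely delicate step---and the one I expect to be the main obstacle---is the energy estimate $c_\lambda<c_1$ for $\lambda\in(0,\lambda^*)$. I would bound $\max_{t\ge0}I_{f_\lambda}(tu_\delta)$ along a truncated Aubin--Talenti instanton $u_\delta$ concentrated at an interior point of $\Sigma$; there $f^-\equiv0$, so the negative part of the weight contributes nothing and the focusing part gives a strictly negative gain $-\frac\lambda p\int_{\R^3}f^+|u_\delta|^p$. The difficulty is intrinsic to $N=3$: the mass term $\frac12|u_\delta|_2^2$ and the truncation of the instanton raise the maximal energy by quantities of the same order in $\delta$ as the subcritical gain, so the strict inequality is not automatic and must be secured by a careful choice of $\delta$ and of the admissible range of $\lambda$, using that $f^+$ is bounded below near the concentration point. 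Once a nontrivial critical point $u$ is obtained, positivity follows in the usual way: replacing the nonlinearities by $f_\lambda(u^+)^{p-1}$ and $(u^+)^5$ and testing against $u^-=\min\{u,0\}$ gives $\|u^-\|^2+b|Du|_2^2|Du^-|_2^2=0$, hence $u\ge0$, and the strong maximum principle upgrades this to $u>0$.
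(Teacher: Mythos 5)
Your skeleton is essentially the paper's: mountain-pass geometry for small $\lambda$, boundedness of $(PS)_{c_\lambda}$ sequences, freezing the nonlocal coefficient $a+bA_0$ at the weak limit (this is exactly the paper's auxiliary functional $J_{f_\lambda}$), a Brezis--Lieb splitting, and a truncated-instanton energy estimate. Several of your steps are sound; in particular your boundedness argument (subtracting $\tfrac14\langle I_{f_\lambda}'(u_n),u_n\rangle$ and absorbing the weight term into the $L^6$ term) works just as well as the paper's variant (which subtracts $\tfrac16\langle\cdot,\cdot\rangle$ and absorbs $\lambda|f^+|_{p_*}S^{-p/2}|Du_n|_2^p$ into the Kirchhoff term $\tfrac{b}{12}|Du_n|_2^4$ by Young's inequality, using $p<4$).

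The genuine gap is your choice of compactness threshold $c_1=\tfrac13(aS)^{3/2}+\tfrac1{12}b^3S^6$ from \eqref{1.14}. That constant is the threshold for the differently-scaled problem of \cite{wt}; for the functional $I_{f_\lambda}$ here, the cost of a concentrating bubble is the strictly larger constant $c^*$ of \eqref{1.10}: writing $l_1=\lim\|v_n\|^2$ and $l_2=\lim b\bigl(\int_{\R^3}|Dv_n|^2\bigr)^2$, Sobolev gives the coupled constraints $l_1\geq aS(l_1+l_2)^{1/3}$ and $l_2\geq bS^2(l_1+l_2)^{2/3}$, whose unique solution (the paper's Lemma 3.6) yields $\tfrac13 l_1+\tfrac1{12}l_2\geq c^*=\max_{t\geq0}\bigl(\tfrac{aS}{2}t^2+\tfrac{bS^2}{4}t^4-\tfrac{t^6}{6}\bigr)$, and $c^*>c_1$ whenever $b>0$ (they coincide only at $b=0$). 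Your claim that a bubble costs at least $c_1$ is therefore true but non-sharp, and that non-sharpness is fatal to your plan: you must then prove $c_\lambda<c_1$, which is impossible for small $\lambda$. Indeed, along any truncated instanton the unperturbed part of the energy has maximum $c^*+O(\varepsilon)$ (paper's \eqref{3.12}), while the gain from the $\lambda f^+$ term is only $O(\lambda)$, so no test-function computation can push the level below $c^*$ by more than $O(\lambda)+O(\varepsilon)$ --- in particular never down to $c_1$, which lies a fixed distance $c^*-c_1>0$ below $c^*$; in fact $c_\lambda$ itself stays near $c^*$ as $\lambda\to0^+$, so the inequality $c_\lambda<c_1$ is simply false in the regime you need. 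The correct calibration, which is what the paper proves (Lemma 3.4, with the coupling $\varepsilon=\lambda^{4/(4-p)}$ so that the truncation and mass errors $O(\varepsilon)$ are of higher order than the gain $C_*\lambda$), is $c_\lambda<c^*-C_0\lambda^{4/(4-p)}$. The margin $C_0\lambda^{4/(4-p)}$ is needed for a second reason you also pass over: the weak limit $u$ is a critical point only of the frozen functional $J_{f_\lambda}$, and since $p<4$ and the weight changes sign, $J_{f_\lambda}(u)$ need not be nonnegative --- it is only bounded below by $-C_0\lambda^{4/(4-p)}$ (paper's \eqref{3.28}) --- so a surviving bubble forces merely $c_\lambda\geq c^*-C_0\lambda^{4/(4-p)}$, not $c_\lambda\geq c^*$; the matching strict upper bound of Lemma 3.4 is exactly what closes the contradiction in Lemma 3.9.
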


Theorem 1.3 can be viewed as a partial extension of a main result in
\cite{hz}.

Now we give our main ideas for the proof of Theorem 1.3 and 1.4.
We prove Theorem 1.3 by showing that $I$ has a positive ground state
critical point in $H^1(\R^3)$. By $(f_1)(f_3)(f_4)$, we can get a
bounded $(PS)_c$ sequence of $I$, however, it is not easy to see that
$I^\prime$ is weakly continuous by direct calculations since equation
\eqref{1.1} is no longer a pointwise identity. Indeed, in general, we do not know $\int_{\R^3}|Du_n|^2\rightarrow\int_{\R^3}|Du|^2$ from $u_n\rightharpoonup u$ in $H^1(\R^3)$. We succeed in doing
so by using the method used in \cite{hz}, which strongly relies on the condition $(f_2)$. Hence, there exists a
critical point for $I$. As we deal
with the critical problem \eqref{1.1} in $H^1(\R^3)$, the Sobolev embeddings
$H^1(\R^3)\hookrightarrow L^q(\R^3),$ $q\in[2,6)$ are not compact.
The functional $I$ does not satisfy $(PS)_c$ condition at every energy
level $c$. To overcome this difficulty, we try to pull the energy level down below some
critical level $c^*$. Considering the nonlocal effect, it is more complicated to handle and careful analysis is needed. $c^*$ is given as follows:
\begin{equation}\label{1.10}
c^*\triangleq\frac{abS^3}{4}+\frac{b^3S^6}{24}+\frac{(b^2S^4+4aS)^{\frac{3}{2}}}{24},
\end{equation}
which is larger than $c_1$ given in \eqref{1.14}. Then we apply the concentration compactness principle to prove that $I$ satisfies $(PS)_c$ condition for any $c\in(0,c^*)$, which implies that $I$ has a nontrivial critical point. Whence a nontrivial
critical point for $I$ has been obtained, the existence of a ground
state critical point follows by standard argument. Then the
proof of Theorem 1.3 is completed.

We will prove Theorem 1.4 by using the mountain pass theorem. To do
so, we try to get a $(PS)_{c_\lambda}$ sequence and to prove
that the $(PS)_{c_\lambda}$ sequence is bounded in $H^1(\R^3)$ and converges to
a positive critical point of $I_{f_\lambda}$ in $H^1(\R^3)$, where $c_\lambda$ is a mountain-pass level. There are some
difficulties. First, since $p\in[2,4)$, condition $(F)$ does not hold, let alone $(AR)$. It is not easy to get the
boundedness of the $(PS)_{c_\lambda}$ sequence. We succeed in doing so by using conditions imposed on $f^+(x)$. Secondly, $p\in[2,4)$ implies that the monotonicity of $\frac{f_\lambda(x)|u|^{p-2}u}{u^3}$ is not true. Then the method to prove that $I^\prime$ is weakly continuous, which was used in the proof of Theorem 1.3, can not be applied here. To overcome this difficulty, although we can not directly prove that the weak limit $u\in H^1(\R^3)$ of a $(PS)_{c_\lambda}$ sequence $\{u_n\}$ is a critical point of $I_{f_\lambda}$, but we do easily see that $u$ is a critical point of the following functional
$$J_{f_\lambda}(u)=\frac{a+bA^2}{2}\ds\int_{\R^3}|Du|^2+\frac{1}{2}\ds\int_{\R^3}|u|^2-\frac{\lambda}{p}\ds\int_{\R^3}f_\lambda(x)|u|^{p}-\frac{1}{6}\ds\int_{\R^3}|u|^6$$
and $\{u_n\}$ is a $(PS)_{c_\lambda+\frac{bA^4}{4}}$ sequence for $J_{f_\lambda}$,
where $A^2=\lim\limits_{n\rightarrow\infty}\int_{\R^3}|Du_n|^2.$
We try to prove that $I_{f_\lambda}$ satisfies $(PS)_{c_\lambda}$ condition with the help of $J_{f_\lambda}$ and by pulling the mountain-pass level $c_\lambda$ down below some critical energy level $c_2$.
As $p\in[2,4)$ and $f_\lambda(x)$
is a sign-changing weight function, it is difficult to get the
critical energy level $c_2$. Inspired by \cite{bn2}, we succeed in obtaining $c_2$ by choosing a suitable cut off function and a suitable
test function. Indeed, since $\Sigma$ is a nonempty domain, we may assume that $0\in\Sigma$ and $B_{R_0}(0)\subset\Sigma$ for some $R_0>0$. For any $\varepsilon>0$, we consider the
following test function
$$w_{\varepsilon}(x)=\eta(x)\widetilde{U}_{\varepsilon}(x),~~x\in\R^3,$$
where $\widetilde{U}_{\varepsilon}(x)=\frac{1}{(\varepsilon^2+|x|^2)^{\frac{1}{2}}}$ and $\eta\in C_0^\infty(\Sigma)$ with
$\eta\geq0$ and $\eta|_{B_{R_0}(0)}\equiv1$. By careful analysis, we proved that there
exists a $\lambda^*>0$ such that for $\lambda\in(0,\lambda^*)$,
the critical energy level is as follows:
\begin{equation}\label{1.11}
c_2\triangleq c^*-\frac{4-p}{4}\left(\frac{3p}{b}\right)^{\frac{p}{4-p}}\left(\frac{6-p}{6p}\frac{|f^+|_{p_*}}{S^{\frac{p}{2}}}\right)^{\frac{4}{4-p}}\lambda^{\frac{4}{4-p}}>0,
\end{equation}
where $c^*$ is given in \eqref{1.10}. Then Theorem 1.4 is proved.

The paper is organized as follows. In $\S$ 2, we prove Theorem 1.1.
In $\S$ 3, we present some preliminary results which will be used to
prove Theorems 1.3 and 1.4. In $\S$ 4, we will prove our main
results Theorems 1.3 and 1.4.

\section{Proof of Theorem 1.1 }
\begin{lem}\label{lem2.1}(Poho\u{z}ave Identity)\ \  Let $u\in H^1(\R^3)$ be a weak solution to
problem \eqref{1.5}, then we have the following Poho\u{z}aev
identity:
\begin{equation}\label{2.1}
\frac{a}{2}\ds\int_{\R^3}|Du|^2+\frac{3}{2}\ds\int_{\R^3}|u|^2+\frac{b}{2}\left(\ds\int_{\R^3}|Du|^2\right)^2-\frac{1}{2}\ds\int_{\R^3}|u|^{6}=0.
\end{equation}
\end{lem}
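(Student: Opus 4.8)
The plan is to derive the Pohozaev identity for a weak solution $u \in H^1(\R^3)$ of equation \eqref{1.5}. The standard route is to multiply the equation by the dilation field $x \cdot Du$ and integrate over $\R^3$, treating the nonlocal coefficient $\left(a + b\int_{\R^3}|Du|^2\right)$ as a fixed constant during the manipulation. Concretely, I would set $M \triangleq a + b\int_{\R^3}|Du|^2$, so that $u$ satisfies the \emph{pointwise} semilinear equation $-M\Delta u + u = u^5$, and then apply the classical Pohozaev argument for this frozen-coefficient problem. The key identities are the three integration-by-parts formulas obtained by pairing each term against $x\cdot Du$: for the Laplacian one gets $\int_{\R^3}(\Delta u)(x\cdot Du) = \frac{1}{2}\int_{\R^3}|Du|^2$ (in dimension $3$, using $\frac{N-2}{2} = \frac{1}{2}$); for the linear term $\int_{\R^3} u\,(x\cdot Du) = -\frac{3}{2}\int_{\R^3}|u|^2$; and for the critical term $\int_{\R^3} u^5 (x\cdot Du) = -\frac{3}{6}\int_{\R^3}|u|^6 = -\frac{1}{2}\int_{\R^3}|u|^6$. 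Multiplying through by the appropriate constants and substituting $M = a + b\int_{\R^3}|Du|^2$ yields exactly \eqref{2.1}, since $M\cdot\frac{1}{2}\int|Du|^2 = \frac{a}{2}\int|Du|^2 + \frac{b}{2}\left(\int|Du|^2\right)^2$.

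To make this rigorous rather than formal, the standard device is to work with a cutoff and a rescaling. I would fix $\phi \in C_0^\infty(\R^3)$ with $\phi \equiv 1$ on $B_1(0)$, set $\phi_R(x) = \phi(x/R)$, and use $\phi_R\,(x\cdot Du)$ as the test function in the weak formulation, then let $R \to \infty$. Equivalently, and perhaps more cleanly, one can exploit the scaling $u_t(x) = u(x/t)$ and differentiate the energy-type expression at $t=1$: since critical points are stationary under admissible variations, the derivative of the suitably weighted action along this dilation family vanishes, producing the identity. Either device requires justifying that the boundary/remainder terms vanish in the limit, which rests on the integrability $u, Du \in L^2$ and $u \in L^6$ guaranteed by $u \in H^1(\R^3)$ together with the embedding $H^1(\R^3)\hookrightarrow L^6(\R^3)$.

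The main obstacle is regularity: the formal computation above differentiates $u$ and manipulates $\Delta u$ pointwise, but a priori $u$ is only a weak $H^1$ solution, so the integrals $\int (\Delta u)(x\cdot Du)$ are not classically defined. The honest resolution is to first upgrade the regularity of $u$. Since the right-hand side $u^5 - u$ sits in the critical Sobolev class and $M$ is a positive constant, elliptic regularity theory (bootstrapping via $L^p$ estimates for $-\Delta$, exactly as in the Brezis--Nirenberg setting referenced as \cite{bn}) gives $u \in W^{2,p}_{loc}$ for suitable $p$ and enough decay to justify all integrations by parts and the vanishing of the boundary terms as $R\to\infty$. I would therefore structure the proof as: (i) fix the constant $M$ and record the pointwise equation; (ii) invoke elliptic regularity to obtain the smoothness and decay needed; (iii) test against the truncated dilation field $\phi_R (x\cdot Du)$, integrate by parts term-by-term, and pass to the limit $R\to\infty$, controlling the cutoff remainders by dominated convergence; (iv) collect the three limiting integrals and re-insert $M = a + b\int_{\R^3}|Du|^2$ to arrive at \eqref{2.1}. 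The dimensional coefficient $\frac{N}{2} = \frac{3}{2}$ on the $|u|^2$ term and the factor $\frac{N}{2^*} = \frac{1}{2}$ on the $|u|^6$ term are where I would be most careful to match the stated constants.
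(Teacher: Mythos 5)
Your proposal is correct and follows essentially the same route as the paper: the paper's proof also freezes the nonlocal coefficient (writing $M^2=\int_{\R^3}|Du|^2$ so that $u$ weakly solves $-\Delta u=\frac{1}{a+bM^2}(u^5-u)$) and then invokes the classical Pohozaev identity for this constant-coefficient semilinear equation, which is exactly the ``standard argument'' you carry out in detail (regularity bootstrap, truncated dilation field, limit $R\to\infty$). Your constants $\frac{N-2}{2}=\frac12$, $\frac{N}{2}=\frac32$, $\frac{N}{6}=\frac12$ all match, so re-inserting $M$ yields \eqref{2.1} as claimed.
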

\begin{proof}~~
Let $M^2\triangleq\int_{\R^3}|Du|^2\in\R$, then $u\in H^1(\R^3)$ is a weak solution to the following problem
$$-\triangle u=\frac{1}{a+bM^2}(u^5-u).$$
Then by a standard argument, we conclude that (2.1) holds.
\end{proof}

\noindent $\textbf{Proof of Theorem 1.1}$\,\,\
\begin{proof}~~
Suppose that $u\in H^1(\R^3)$ is a solution to problem
\eqref{1.5}, then $u$ satisfies the Poho\u{z}aev identity
\eqref{2.1} and
$$
a\ds\int_{\R^3}|Du|^2+\ds\int_{\R^3}|u|^2+b\left(\ds\int_{\R^3}|Du|^2\right)^2-\ds\int_{\R^3}|u|^{6}=0.
$$
Hence we conclude that $\int_{\R^3}|u|^2=0,$ which implies that
$u=0$.
\end{proof}

\section{Preliminary results for Theorem 1.3 and 1.4 }
In this section, we will give some preliminary results which will be
used in the proof of Theorems 1.3 and 1.4.

Throughout this paper, for each $q\in[2,6]$, by the Sobolev embeddings, we denote
\begin{equation}\label{3.25}
S_q=\inf\limits_{H^1(\R^3)\backslash\{0\}}\frac{\|u\|^2}{|u|_q^2}>0~~~~\hbox{and}
~~~~S=\inf\limits_{D^{1,2}(\R^3)\backslash\{0\}}\frac{\int_{\R^3}|Du|^2}{|u|_6^2}>0.
\end{equation}
Then
\begin{equation}\label{3.1}
|u|_q\leq
S_q^{-\frac{1}{2}}\|u\|~~\hbox{for}~u\in H^1(\R^3)
\end{equation}
and
\begin{equation}\label{3.2}
|u|_6\leq
S^{-\frac{1}{2}}\left(\ds\int_{\R^3}|Du|^2\right)^{\frac{1}{2}}~~\hbox{for}~u\in
D^{1,2}(\R^3).
\end{equation}
\begin{lem}\label{lem3.1}\ \

\noindent(i)~~~Suppose that (A) holds, then the functional $I$ possesses a
mountain pass geometry:

(a)~~There exist $\alpha,$ $\rho>0$ such that $I(u)\geq\alpha$ for
all $\|u\|=\rho$.

(b)~~There exists an $e\in B_\rho^c(0)$ such that $I(e)<0$.

\noindent(ii)~~~Suppose that (B) holds for $p\in[2,4)$, let $\lambda_0=\frac{S_6}{|f^+|_{\frac{3}{2}}}$, then the
functional $I_{f_\lambda}$ possesses the mountain-pass geometry for
all $\lambda>0$ if $p\in(2,4)$ or all $\lambda\in(0,\lambda_0)$
if $p=2$.
\end{lem}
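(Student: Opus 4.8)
The plan is to verify the two standard mountain-pass conditions for each functional by elementary estimates using the Sobolev inequalities \eqref{3.1} and \eqref{3.2}.
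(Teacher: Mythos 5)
Your plan points in the right direction---the paper's own proof is indeed a chain of elementary estimates via the Sobolev embeddings---but as written it is only a plan, and two pieces of genuine content are missing. First, for (i)(a) the nonlinearity must be tamed before any Sobolev inequality can be applied: from $(f_1)$ and $(f_3)$ one derives that for every $\varepsilon>0$ there exists $C_\varepsilon>0$ with $F(t)\leq\varepsilon t^4+C_\varepsilon t^6$, whence
\begin{equation*}
I(u)\;\geq\;\frac{1}{2}\|u\|^2-\varepsilon S_4^{-2}\|u\|^4-C_\varepsilon S_6^{-3}\|u\|^6,
\end{equation*}
and a small sphere $\|u\|=\rho$ gives the rim. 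Without this growth bound (recall $f$ is merely continuous, of order $t^3$ at the origin and $o(t^5)$ at infinity) the estimate cannot even start. For (i)(b) the point is that the critical term $-\frac{t^6}{6}\int_{\R^3}|u|^6$ dominates the Kirchhoff term $\frac{b t^4}{4}\left(\int_{\R^3}|Du|^2\right)^2$ as $t\to+\infty$, while $(f_4)$ guarantees $-\int_{\R^3}F(tu)\leq 0$; this is the one place where it matters that the exponent $6$ beats the nonlocal exponent $4$, and it should be said explicitly.

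Second, and more seriously, your proposal does not engage with the only delicate point of (ii): why the statement distinguishes $p=2$ from $p\in(2,4)$, and where $\lambda_0=S_6/|f^+|_{\frac{3}{2}}$ comes from. One writes $f_\lambda=\lambda f^+-f^-$ and discards the $f^-$ contribution (it enters $I_{f_\lambda}$ with a favorable sign), then applies H\"older with exponents $\left(p_*,\frac{6}{p}\right)$, $p_*=\frac{6}{6-p}$, and \eqref{3.1} to get $\frac{\lambda}{p}\int_{\R^3}f^+(x)|u|^p\leq\frac{\lambda}{p}|f^+|_{p_*}S_6^{-\frac{p}{2}}\|u\|^p$. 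For $p\in(2,4)$ this perturbation is of strictly higher order than $\|u\|^2$ near $u=0$, so the rim exists for \emph{every} $\lambda>0$; but for $p=2$ it is exactly quadratic, competing at the same order as $\frac{1}{2}\|u\|^2$, and the rim survives only if the coefficient $\frac{1}{2}\left(1-\lambda|f^+|_{\frac{3}{2}}S_6^{-1}\right)$ stays positive, i.e.\ only for $\lambda<\lambda_0$. Any argument that treats all $p\in[2,4)$ uniformly by ``elementary Sobolev estimates'' would fail at $p=2$; isolating this case is the substance of part (ii).
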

\begin{proof}~~

\noindent(i)~~~(a)~~By $(f_1)(f_3)$, for any $\varepsilon>0$,
there exists $C_\varepsilon>0$ such that $F(u)\leq \varepsilon|u|^4+C_\varepsilon u^6.$ Then by \eqref{3.1}, we have that
$$\begin{array}{ll}
I(u)&\geq\frac{1}{2}\|u\|^2+\frac{b}{4}\left(\ds\int_{\R^3}|Du|^2\right)^2-\varepsilon\ds\int_{\R^3}|u|^4-C_\varepsilon\ds\int_{\R^3}|u|^6\\[5mm]
&\geq\frac{1}{2}\|u\|^2-\varepsilon S_4^{-2}\|u\|^4-S^{-3}_6C_\varepsilon\|u\|^6,\\[5mm]
\end{array}$$
hence there exist $\alpha,~\rho>0$ such that $I(u)\geq\alpha$ for
all $\|u\|=\rho$.

(b)~~By $(f_4)$, there exists $C>0$ such that
\begin{equation}\label{3.26}
F(u)\geq C|u|^\mu~~~\hbox{for~all}~u\in\R.\end{equation} For any
$u\in H^1(\R^3)\backslash\{0\}$, $t>0$,
$$
I(tu)\leq\frac{t^2}{2}\|u\|^2+\frac{bt^4}{4}\left(\ds\int_{\R^3}|Du|^2\right)^2-Ct^\mu\ds\int_{\R^3}|u|^\mu-\frac{t^6}{6}|u|^6
\rightarrow-\infty~~\hbox{as}~~t\rightarrow+\infty.
$$
Then there exists $t_0>0$ large such that $I(t_0u)<0$ and
$\|t_0u\|>\rho$.

\noindent(ii)~~~(a)~~Since $p\in[2,4)$, by \eqref{3.1},
$$\begin{array}{ll}
I_{f_\lambda}(u)&\geq\frac{1}{2}\|u\|^2+\frac{b}{4}\left(\ds\int_{\R^3}|Du|^2\right)^2-\frac{\lambda}{p}\ds\int_{\R^3}f^+(x)|u|^p-\frac{1}{6}\ds\int_{\R^3}|u|^6\\[5mm]
&\geq\left\{%
\begin{array}{ll}
    \frac{1}{2}\|u\|^2-\frac{\lambda}{p}|f^+|_{p_*}S_6^{-\frac{p}{2}}\|u\|^p-\frac{S_6^{-3}}{6}\|u\|^6, & \hbox{if~}p\in(2,4), \\
    \frac{1}{2}\left(1-\lambda|f^+|_{\frac{3}{2}}S_6^{-1}\right)\|u\|^2-\frac{S_6^{-3}}{6}\|u\|^6, & \hbox{if~}p=2.\\
\end{array}%
\right.
\end{array}
$$
Then for all $\lambda\in(0,\lambda_0)$ if $p=2$ or any
$\lambda>0$ if $p\in(2,4)$,
$I_{f_\lambda}|_{B_\delta(0)}>\beta$ for some $\delta,~\beta>0$.

(b)~~The proof is similar to that of (i).
\end{proof}

By the mountain-pass theorem (see e.g. Theorem 2.10 in \cite{wi}), there exists a $(PS)_{c}$ sequence $\{u_n\}\subset
H^1(\R^3)$ and a $(PS)_{c_\lambda}$ sequence $\{\tilde{u}_n\}\subset
H^1(\R^3)$ such that
$$I(u_n)\rightarrow c,~~~~~~I^\prime(u_n)\rightarrow
0~~\hbox{in}~H^{-1}(\R^3)$$and
$$I_{f_\lambda}(\tilde{u}_n)\rightarrow c_\lambda,~~~~~~I_{f_\lambda}^\prime(\tilde{u}_n)\rightarrow
0~~\hbox{in}~H^{-1}(\R^3),$$ where
$c=\inf\limits_{\gamma\in\Gamma}\max\limits_{t\in[0,1]}I(\gamma(t))$,
$c_\lambda=\inf\limits_{\tilde{\gamma}\in\widetilde{\Gamma}}\max\limits_{t\in[0,1]}I_{f_\lambda}(\tilde{\gamma}(t))$
and
$$\Gamma=\{\gamma\in
C([0,1],H^1(\R^3))|~\gamma(0)=0,~I({\gamma}(1))<0\},$$
$$\widetilde{\Gamma}=\{\tilde{\gamma}\in
C([0,1],H^1(\R^3))|~\tilde{\gamma}(0)=0,~I_{f_\lambda}({\tilde{\gamma}}(1))<0\}.$$

\begin{lem}\label{lem3.2}\ \
Suppose that $\{u_n\}$ and $\{\tilde{u}_n\}$ are $(PS)_c$ and $(PS)_{c_\lambda}$ sequences for $I$ and $I_{f_\lambda}$ respectively, then

(i)~~~there exists a $u\in H^1(\R^3)$ such that $I^\prime(u)=0$. Moreover, if $u\not\equiv0$, then
$$\ds\int_{\R^3}|Du_n|^2\rightarrow\ds\int_{\R^3}|Du|^2.$$

(ii)~~~$\{\tilde{u}_n\}$ is bounded in $H^1(\R^3)$.
\end{lem}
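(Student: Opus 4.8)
The plan is to handle the two assertions separately, since they rest on different mechanisms.

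For (i), I would first show $\{u_n\}$ is bounded. Fixing $\theta\in(4,6)$ with $\theta\le\mu$ (possible because $(f_4)$ gives $\mu>4$), I compute $I(u_n)-\frac1\theta\langle I'(u_n),u_n\rangle$. Since $\frac12-\frac1\theta>0$, $\frac14-\frac1\theta>0$ and $\frac1\theta-\frac16>0$, while $(f_4)$ forces $\frac1\theta f(u_n)u_n-F(u_n)\ge0$, every resulting term is nonnegative, and the left side is $c+o(\|u_n\|)+o(1)$; hence $\|u_n\|^2\le C(1+\|u_n\|)$ and $\{u_n\}$ is bounded. Passing to a subsequence, $u_n\rightharpoonup u$ in $H^1(\R^3)$, $u_n\to u$ in $L^q_{loc}$ $(q<6)$ and a.e. Writing $B^2=\lim_n\int_{\R^3}|Du_n|^2$ and $t_0^2=\int_{\R^3}|Du|^2$, weak lower semicontinuity gives $B^2\ge t_0^2$. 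Using $(f_1),(f_3)$ to control $f(u_n)$ and the a.e. convergence of $|u_n|^4u_n$ (bounded in $L^{6/5}$), I pass to the limit in $\langle I'(u_n),\varphi\rangle$ for $\varphi\in C_c^\infty(\R^3)$ and obtain that $u$ solves the frozen equation $-(a+bB^2)\Delta u+u=f(u)+|u|^4u$; equivalently, $\langle I'(u),\varphi\rangle=b(t_0^2-B^2)\int_{\R^3}Du\cdot D\varphi$ for all $\varphi$. Thus if $u\equiv0$ then $I'(u)=0$ trivially, and in general (i) reduces to proving that $u\not\equiv0$ (so $t_0>0$) forces $B^2=t_0^2$, which in turn yields $I'(u)=0$ and $\int|Du_n|^2\to\int|Du|^2$.

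This equality $B^2=t_0^2$ — precisely the point where the nonlocal term obstructs weak continuity of $I'$ — is the main obstacle, and it is here that $(f_2)$ is indispensable. I would argue by contradiction, assuming $B^2>t_0^2$. Testing the frozen equation with $u$ gives $(a+bB^2)t_0^2+|u|_2^2=\int_{\R^3}f(u)u+|u|_6^6$, hence $\langle I'(u),u\rangle=bt_0^2(t_0^2-B^2)<0$. Now consider $j(t)=I(tu)$. Using $(f_2)$ in the form $f(tu)/(tu)^3\ge f(u)/u^3$ for $t\ge1$, the frozen Nehari identity just written, and $B^2\ge t_0^2$, a short computation shows $j'(t)<0$ for every $t\ge1$; since $j'(t)/t\to at_0^2+|u|_2^2>0$ as $t\to0^+$, the maximum of $j$ over $[0,\infty)$ is attained at some $t^*\in(0,1)$ with $\langle I'(t^*u),t^*u\rangle=0$. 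Choosing $T>1$ with $I(Tu)<0$, the path $s\mapsto sTu$, $s\in[0,1]$, lies in $\Gamma$, so $c\le\max_{t\in[0,T]}I(tu)=I(t^*u)$. Evaluating $I(t^*u)=I(t^*u)-\frac14\langle I'(t^*u),t^*u\rangle$ and using $t^*<1$ together with the fact that $(f_2)$ makes $s\mapsto\frac14 f(s)s-F(s)$ nonnegative and nondecreasing, I bound $c\le\frac14(at_0^2+|u|_2^2)+\int_{\R^3}(\frac14 f(u)u-F(u))+\frac1{12}|u|_6^6$. On the other hand, from $c=\lim_n[I(u_n)-\frac14\langle I'(u_n),u_n\rangle]$, the nonnegativity of the three surviving terms, $\int|Du_n|^2\to B^2$ and Fatou's lemma, I get the reverse bound with $at_0^2$ replaced by $aB^2$. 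Subtracting, $\frac14 a(B^2-t_0^2)\le0$, contradicting $B^2>t_0^2$; hence $B^2=t_0^2$, which completes (i).

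For (ii) the Ambrosetti–Rabinowitz mechanism is unavailable since $p\in[2,4)$, so instead of seeking cancellation I would absorb the perturbation into the critical term. Computing $I_{f_\lambda}(\tilde u_n)-\frac14\langle I_{f_\lambda}'(\tilde u_n),\tilde u_n\rangle=\frac14\|\tilde u_n\|^2+(\frac14-\frac1p)\int_{\R^3}f_\lambda|\tilde u_n|^p+\frac1{12}|\tilde u_n|_6^6$ and splitting $f_\lambda=\lambda f^+-f^-$, I note that because $\frac14-\frac1p<0$ the $f^-$ contribution is nonnegative and may be discarded, leaving the single dangerous term $-(\frac1p-\frac14)\lambda\int_{\R^3}f^+|\tilde u_n|^p$. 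Here the hypothesis $f^+\in L^{p_*}(\R^3)$ enters: by Hölder $\int_{\R^3}f^+|\tilde u_n|^p\le|f^+|_{p_*}|\tilde u_n|_6^p$, and by Young's inequality with exponents $6/p$ and $6/(6-p)$ this is at most $\frac1{24}|\tilde u_n|_6^6+C(\lambda)$. The term $\frac1{12}|\tilde u_n|_6^6$ then absorbs it, so $\frac14\|\tilde u_n\|^2\le c_\lambda+C(\lambda)+o(\|\tilde u_n\|)+o(1)$, which bounds $\{\tilde u_n\}$. The essential point is that the critical term, being of highest degree, dominates the perturbation regardless of the lack of an $(AR)$ or $(F)$ condition; only the integrability of $f^+$ is used.
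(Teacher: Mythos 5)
Your proposal is correct. For part (i) you follow essentially the paper's own argument: boundedness from $(f_4)$, passage to the ``frozen'' equation $-(a+bB^2)\Delta u+u=f(u)+u^5$ satisfied by the weak limit, and a contradiction under the assumption $B^2>t_0^2=\int_{\R^3}|Du|^2$, obtained by locating the maximum point $t^*\in(0,1)$ of $t\mapsto I(tu)$ (this is exactly where $(f_2)$ enters, in both proofs), comparing with the mountain-pass level via an admissible path, and confronting the resulting upper bound with $c=\lim_n\bigl[I(u_n)-\tfrac14\langle I'(u_n),u_n\rangle\bigr]$; the only difference is cosmetic, namely that the paper extracts the final contradiction $c<c$ from the strict monotonicity of $s\mapsto\tfrac14f(s)s-F(s)$ together with $t_0<1$, while you extract $\tfrac{a}{4}(B^2-t_0^2)\le 0$ from the gradient term, and both close the argument. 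Part (ii) is where you genuinely deviate. The paper computes $I_{f_\lambda}(\tilde u_n)-\tfrac16\langle I_{f_\lambda}'(\tilde u_n),\tilde u_n\rangle$, so that the critical sextic term cancels and the surviving Kirchhoff term $\tfrac{b}{12}|D\tilde u_n|_2^4$ absorbs the $f^+$ contribution after H\"older, Sobolev ($|u|_6\le S^{-1/2}|Du|_2$) and Young; this requires $p<4$ and $b>0$, and it produces exactly the constant $C_0\lambda^{\frac{4}{4-p}}$ appearing in Lemma 3.4, a computation the paper re-runs for $J_{f_\lambda}$ in estimate (3.28) inside the compactness Lemma 3.9. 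You instead take the multiplier $\tfrac14$, so the Kirchhoff term cancels and the surviving critical term $\tfrac{1}{12}|\tilde u_n|_6^6$ absorbs the perturbation via H\"older and Young with exponents $6/p$ and $6/(6-p)$; this needs only $p<6$ and $f^+\in L^{p_*}$, and would work even for $b=0$, so it is the more robust route for pure boundedness. What it does not deliver is the specific quantitative constant $C_0$ that the paper's choice of multiplier generates and then exploits in the energy-level comparison later on; for the statement of Lemma 3.2 itself this is irrelevant, but it explains the paper's seemingly less natural choice of $\tfrac16$.
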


\begin{proof}~~
(i)~~~The proof is similar to that of Lemmas 2.2 and 2.4 in \cite{hz}. For reader's convenience, we give a detailed proof.

By $(f_4)$, we see that
$$c+1+\|u_n\|\geq I(u_n)-\frac{1}{4}\langle I^\prime(u_n),u_n\rangle\geq \frac{1}{4}\|u_n\|^2,$$
then $\{u_n\}$ is bounded in $H^1(\R^3)$. Up to a subsequence, we
may assume that, there exists a $u\in H^1(\R^3)$ and $A\in\R$ such that
\begin{equation}\label{3.3}
u_n\rightharpoonup u~~\hbox{in}~H^1(\R^3),
\end{equation}
and
$$\ds\int_{\R^3}|Du_n|^2\rightarrow A^2.$$
If $u\equiv0$, then the proof is completed. If $u\not\equiv0$, then we see that
$$\ds\int_{\R^3}|Du|^2\leq A^2.$$
Just suppose that $\int_{\R^3}|Du|^2<A^2.$ By $I^\prime(u_n)\rightarrow0$ in $H^{-1}(\R^3)$ and \eqref{3.3}, we have that
$$\ds\int_{\R^3}(aDuD\varphi+u\varphi)+bA^2\ds\int_{\R^3}DuD\varphi-\ds\int_{\R^3}f(u)\varphi-\ds\int_{\R^3}u^5\varphi=0,~~\forall~\varphi\in H^1(\R^3).$$
Then $\langle I^\prime(u),u\rangle<0$. $(f_1)$ $(f_3)$ imply that $\langle I^\prime(tu),tu\rangle>0$ for small $t>0$. Hence there exists a $t_0\in(0,1)$ satisfying $\langle I^\prime(t_0u),t_0u\rangle=0$. Moreover, by $(f_2)$ $(f_4)$, we see that $I(t_0u)=\max\limits_{t\in[0,1]}I(tu)$ and $tu\in\Gamma$. We easily conclude from $(f_2)$ that $\frac{1}{4}f(s)s-F(s)$ is strictly increasing in $s>0$. So
$$\begin{array}{ll}
c&\leq I(t_0u)=I(t_0u)-\frac{1}{4}\langle I^\prime(t_0u),t_0u\rangle\\[5mm]
&=\frac{1}{4}t_0^2\int_{\R^3}(a|Du|^2+|u|^2)+\int_{\R^3}\left(\frac{1}{4}f(t_0u)t_0u-F(t_0u)\right)+\frac{1}{12}t_0^6\int_{\R^3}|u|^6\\[5mm]
&<\frac{1}{4}\int_{\R^3}(a|Du|^2+|u|^2)+\int_{\R^3}\left(\frac{1}{4}f(u)u-F(u)\right)+\frac{1}{12}\int_{\R^3}|u|^6\\[5mm]
&=\liminf\limits_{n\rightarrow\infty}\left[\frac{1}{4}\int_{\R^3}(a|Du_n|^2+|u_n|^2)+\int_{\R^3}\left(\frac{1}{4}f(u_n)u_n-F(u_n)\right)+\frac{1}{12}\int_{\R^3}|u_n|^6\right]\\[5mm]
&=\liminf\limits_{n\rightarrow\infty}(I(u_n)-\frac{1}{4}\langle I^\prime(u_n),u_n\rangle)\\[5mm]
&=c,\end{array}$$
which is impossible. Then $\int_{\R^3}|Du|^2=A^2=\lim\limits_{n\rightarrow\infty}\int_{\R^3}|Du_n|^2$ and $I^\prime(u)=0$.

(ii)~~~Since $p\in[2,4)$, by \eqref{3.2} and the H\"{o}lder and Yang inequalities, we have that
$$\begin{array}{ll}
c_\lambda+1+\|\tilde{u}_n\|&\geq I_{f_\lambda}(\tilde{u}_n)-\frac{1}{6}\langle I_{f_\lambda}^\prime(\tilde{u}_n),\tilde{u}_n\rangle\\[5mm]
&\geq \frac{1}{3}\|\tilde{u}_n\|^2+\frac{b}{12}|D\tilde{u}_n|_2^4-\lambda\frac{6-p}{6p}|f^+|_{p_*}S^{-\frac{p}{2}}|D\tilde{u}_n|_2^p\\[5mm]
&\geq\frac{1}{3}\|\tilde{u}_n\|^2-\frac{4-p}{4}\left(\frac{3p}{b}\right)^{\frac{p}{4-p}}\left(\lambda\frac{6-p}{6p}\frac{|f^+|_{p_*}}{S^{\frac{p}{2}}}\right)^{\frac{4}{4-p}},
\end{array}
$$
then $\{\tilde{u}_n\}$ is bounded in $H^1(\R^3)$.
\end{proof}

For all $\varepsilon>0$, we consider
\begin{equation}\label{3.6}
U_\varepsilon(x)=\frac{(3\varepsilon^2)^{\frac{1}{4}}}{(\varepsilon^2+|x|^2)^{\frac{1}{2}}},~~x\in\R^3,
\end{equation} which is a solution of the critical problem $-\Delta u=u^5$ in
$\R^3$ and
$\int_{\R^3}|DU_\varepsilon|^2=\int_{\R^3}|U_\varepsilon|^6=S^{\frac{3}{2}}$.
Let $\varphi$ be a smooth cut off function, i.e. $\varphi\in
C_0^\infty(\R^3)$, $0\leq\varphi\leq1$ and there exists $R>0$ satisfying supp$\varphi\subset
B_{2R}$, $\varphi|_{B_R}\equiv1$ and $|D\varphi|\leq\frac{2}{R}$.

Set $$w_\varepsilon=\varphi U_\varepsilon
~~~\hbox{and}~~~
v_\varepsilon=\frac{w_\varepsilon}{|w_\varepsilon|_6^6}.$$ By
\cite{bn}, we have
\begin{equation}\label{3.7}
|Dv_\varepsilon|_2^2=S+O(\varepsilon)
\end{equation}
and for any $s\in[2,6)$,
\begin{equation}\label{3.8}
|v_\varepsilon|_s^s=\left\{%
\begin{array}{ll}
    O(\varepsilon^{\frac{s}{2}}), & \hbox{$\hbox{if}~s\in[2,3)$}, \\
    O(\varepsilon^{\frac{3}{2}}|\hbox{log}~\varepsilon|), & \hbox{$\hbox{if}~s=3$},\\
    O(\varepsilon^{\frac{6-s}{2}}), & \hbox{$\hbox{if}~s\in(3,6)$}.\\
\end{array}%
\right.
\end{equation}

\begin{lem}\label{lem3.3}\ \
$c<c^*\triangleq\frac{abS^3}{4}+\frac{b^3S^6}{24}+\frac{(b^2S^4+4aS)^{\frac{3}{2}}}{24}$.
\end{lem}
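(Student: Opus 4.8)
The plan is to bound the mountain-pass level $c$ from above by $\sup_{t\ge 0} I(t v_\va)$, where $v_\va$ is the normalized truncated Aubin--Talenti bubble of \eqref{3.7}--\eqref{3.8} with $|v_\va|_6=1$. Since $I(t v_\va)\to-\infty$ as $t\to+\infty$ (by $(f_4)$, exactly as in the proof of Lemma \ref{lem3.1}(i)(b)), for large $T$ the rescaled ray $t\mapsto tTv_\va$, $t\in[0,1]$, is an admissible path in $\Gamma$, so $c\le \sup_{t\ge0} I(tv_\va)$. Everything then reduces to estimating this one-dimensional supremum and showing it lies strictly below $c^*$ for $\va$ small.

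Write $D=|Dv_\va|_2^2=S+O(\va)$ and $B=|v_\va|_2^2=O(\va)$ (the $s=2$ case of \eqref{3.8}), and recall $|v_\va|_6^6=1$. Momentarily dropping the nonnegative term $\int_{\R^3}F(tv_\va)$, the remaining ``bubble part'' is
\[
g_0(t)=\frac{t^2}{2}(aD+B)+\frac{bt^4}{4}D^2-\frac{t^6}{6}.
\]
Setting $m=t^2$ and solving $g_0'(t)=0$ gives the maximizer $m_\va=\tfrac12\bigl(bD^2+\sqrt{b^2D^4+4(aD+B)}\bigr)$; using the critical relation $m_\va^2=(aD+B)+bD^2m_\va$ one computes
\[
\sup_{t\ge0}g_0(t)=\frac{b^3D^6+6bD^2(aD+B)}{24}+\frac{\bigl(b^2D^4+4(aD+B)\bigr)^{3/2}}{24}.
\]
Substituting $D=S+O(\va)$, $B=O(\va)$ and expanding shows this equals $c^*+O(\va)$, with $c^*$ exactly as in the statement; this is where the precise form of $c^*$ originates.

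It remains to show the nonlinear term beats the positive $O(\va)$ remainder. Let $t_\va$ maximize $I(\,\cdot\,v_\va)$ on $[0,\infty)$. A preliminary step is the uniform confinement $0<t_1\le t_\va\le t_2<+\infty$ for small $\va$: since $I(tv_\va)\le g_0(t)$ while $\max_t I(tv_\va)\ge g_0(m_\va^{1/2})-\int F(m_\va^{1/2}v_\va)\ge c^*/2>0$, the maximizer is trapped in the compact set $\{t:g_0(t)\ge c^*/2\}$, which is a fixed subinterval of $(0,\infty)$. By \eqref{3.26}, $F(s)\ge C|s|^\mu$; since $(f_4)$ remains valid with $\mu$ replaced by any smaller exponent exceeding $4$, we may assume $\mu\in(4,6)$. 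Hence, using the genuine (two-sided) order of \eqref{3.8} for $s=\mu\in(3,6)$,
\[
\int_{\R^3}F(t_\va v_\va)\ge C\,t_\va^{\mu}\,|v_\va|_\mu^\mu\ge C\,t_1^{\mu}\,c_0\,\va^{\frac{6-\mu}{2}}.
\]
Combining, $I(t_\va v_\va)=g_0(t_\va)-\int F(t_\va v_\va)\le \sup_t g_0(t)-\int F(t_\va v_\va)\le c^*+O(\va)-C'\va^{\frac{6-\mu}{2}}$, and since $\mu>4$ gives $\frac{6-\mu}{2}<1$, the negative term dominates, so $I(t_\va v_\va)<c^*$ for all small $\va$, whence $c<c^*$.

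The main obstacle is the competition of two orders: the lower-order mass $B=|v_\va|_2^2$ raises the barrier above $c^*$ by a positive amount of order $\va$, so the strict inequality hinges on producing a genuine lower bound (not merely an $O$-upper bound) of order $\va^{(6-\mu)/2}$ for $\int F(t_\va v_\va)$, together with the uniform confinement $t_1\le t_\va\le t_2$. Securing the matching lower bound for $|v_\va|_\mu^\mu$ and the admissible choice $\mu\in(4,6)$ is the delicate point; once these are in hand the conclusion follows from the exponent comparison $\frac{6-\mu}{2}<1$.
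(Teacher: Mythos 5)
Your argument is correct and is essentially the paper's own proof: the same truncated bubbles $v_\varepsilon$, the same splitting $I(tv_\varepsilon)=g_0(t)-\int_{\R^3}F(tv_\varepsilon)$ with $\sup_{t\ge0}g_0(t)=c^*+O(\varepsilon)$ (your closed formula for the supremum agrees with \eqref{3.10}), a uniform positive lower bound on the maximizer $t_\varepsilon$, the lower bound $\int_{\R^3}F(t_\varepsilon v_\varepsilon)\ge C\varepsilon^{\frac{6-\mu}{2}}$ via \eqref{3.26} and \eqref{3.8}, and the exponent comparison $\frac{6-\mu}{2}<1$. You are in fact more careful than the paper on two points it glosses over: the reduction to $\mu\in(4,6)$ (legitimate, since $(f_4)$ persists when $\mu$ is decreased) and the insistence on a genuine two-sided bound for $|v_\varepsilon|_\mu^\mu$ rather than the bare $O(\cdot)$ of \eqref{3.8}.

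The one place you deviate is the confinement of $t_\varepsilon$, and there your write-up has a small hole: the inequality $g_0(m_\varepsilon^{1/2})-\int_{\R^3}F(m_\varepsilon^{1/2}v_\varepsilon)\ge c^*/2$ is asserted, not proved, and it is not immediate, because the natural bound $F(s)\le\delta s^4+C_\delta s^6$ coming from $(f_1)(f_3)$ leaves the term $C_\delta m_\varepsilon^3|v_\varepsilon|_6^6$, which is of order one, not small. It can be repaired with a three-term bound $F(s)\le\delta\left(s^4+s^6\right)+C_\delta |s|^q$ for a fixed $q\in(4,6)$, whose integral against $m_\varepsilon^{1/2}v_\varepsilon$ is $\delta\, O(1)+C_\delta\, O(\varepsilon^{\frac{6-q}{2}})$, and then choosing $\delta$ small. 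Alternatively, observe that only the lower bound $t_\varepsilon\ge t_1>0$ enters your final estimate (the upper bound $t_2$ is never used), and that lower bound is exactly what the paper establishes, more simply, by contradiction: if $t_{\varepsilon_n}\to0$ then $I(t_{\varepsilon_n}v_{\varepsilon_n})\to0$, contradicting $I(t_{\varepsilon_n}v_{\varepsilon_n})=\max_{t\ge0}I(tv_{\varepsilon_n})\ge c\ge\alpha>0$, where $\alpha$ is the mountain-pass barrier from Lemma \ref{lem3.1}. With either repair your proof is complete.
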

\begin{proof}~~
For any $\varepsilon>0$, $t\geq0$, set
$$h_\varepsilon(t)\triangleq
I(tv_\varepsilon)=\frac{t^2}{2}\|v_\varepsilon\|^2+\frac{t^4}{4}b\left(\ds\int_{\R^3}|Dv_\varepsilon|^2\right)^2-\ds\int_{\R^3}F(tv_\varepsilon)-\frac{t^6}{6}.$$
By $(f_1)-(f_3)$, we easily see that $h_\varepsilon(t)$ has a unique
critical point $t_\varepsilon>0$ which corresponds to its maximum, i.e. $I(t_\varepsilon v_\varepsilon)=\max\limits_{t\geq0}
I(tv_\varepsilon)$. We claim that
$\{t_\varepsilon\}_{\varepsilon>0}$ is bounded from below by a
positive constant. Otherwise, there exists a sequence
$\{\varepsilon_n\}\subset\R^+$ satisfying
$\lim\limits_{n\rightarrow\infty}t_{\varepsilon_n}=0$ and
$I(t_{\varepsilon_n}v_{\varepsilon_n})=\max\limits_{t\geq0}
I(tv_{\varepsilon_n})$, then $0<\alpha\leq
c\leq\lim\limits_{n\rightarrow\infty}I(t_{\varepsilon_n}v_{\varepsilon_n})=0$,
which is impossible. So there exists $C>0$ independent of
$\varepsilon$ satisfying
\begin{equation}\label{3.9}
t_\varepsilon>C>0~~\hbox{for~all}~\varepsilon>0.
\end{equation}

Let
$$g(t)\triangleq\frac{t^2}{2}\|v_\varepsilon\|^2+\frac{t^4}{4}b\left(\ds\int_{\R^3}|Dv_\varepsilon|^2\right)^2-\frac{t^6}{6}.$$
We see that
\begin{equation}\label{3.10}
\begin{array}{ll}
\max\limits_{t\geq0}
g(t)&=\frac{b\|v_\varepsilon\|^2\left(\int_{\R^3}|Dv_\varepsilon|^2\right)^2}{4}+\frac{b^3\left(\int_{\R^3}|Dv_\varepsilon|^2\right)^6}{24}+\frac{\left[b^2\left(\int_{\R^3}|Dv_\varepsilon|^2\right)^4+4\|v_\varepsilon\|^2\right]^{\frac{3}{2}}}{24}\\[5mm]
&=\frac{abS^3}{4}+\frac{b^3S^6}{24}+\frac{(b^2S^4+4aS)^{\frac{3}{2}}}{24}+O(\varepsilon)
\end{array}
\end{equation}for $\varepsilon>0$ small enough.
By \eqref{3.7}-\eqref{3.10} and \eqref{3.26}, we have that
\begin{equation}\label{3.11}
\begin{array}{ll}
c\leq\max\limits_{t\geq0} I(tv_\varepsilon)&=I(t_\varepsilon
v_\varepsilon)\\[5mm]
&=\frac{t_\varepsilon^2}{2}\|v_\varepsilon\|^2+\frac{bt_\varepsilon^4}{4}\left(\ds\int_{\R^3}|Dv_\varepsilon|^2\right)^2-\ds\int_{\R^3}F(tv_\varepsilon)-\frac{t_\varepsilon^6}{6}\\[5mm]
&\leq
g(t_\varepsilon)-C(t_\varepsilon)^\mu\ds\int_{\R^3}|v_\varepsilon|^\mu\\[5mm]
&\leq \max\limits_{t\geq0} g(t)-C|v_\varepsilon|_\mu^\mu\\[5mm]
&=\frac{abS^3}{4}+\frac{b^3S^6}{24}+\frac{(b^2S^4+4aS)^{\frac{3}{2}}}{24}+O(\varepsilon)-O(\varepsilon^{\frac{6-\mu}{2}}).
\end{array}
\end{equation}
Since $\mu>4$, the conclusion follows from \eqref{3.11}
for $\varepsilon>0$ small enough.
\end{proof}

\begin{lem}\label{lem3.4}\ \
There exists $\lambda^*>0$ such that for all
$\lambda\in(0,\lambda^*)$,
$$c_\lambda<c^*-C_0\lambda^{\frac{4}{4-p}},$$where $c^*$ is given in
Lemmas 3.3 and $C_0=\frac{4-p}{4}\left(\frac{3p}{b}\right)^{\frac{p}{4-p}}\left(\frac{6-p}{6p}\frac{|f^+|_{p_*}}{S^{\frac{p}{2}}}\right)^{\frac{4}{4-p}}$.
\end{lem}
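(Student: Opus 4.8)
The plan is to bound the mountain-pass level $c_\lambda$ from above by testing $I_{f_\lambda}$ along the ray generated by the truncated Aubin--Talenti instanton concentrated inside $\Sigma$, and then to read off the gain $C_0\lambda^{\frac{4}{4-p}}$ from the positive weight $f^+$. Concretely, I would fix the test function $w_\varepsilon=\eta\,\widetilde{U}_\varepsilon$ from the introduction, with $\widetilde{U}_\varepsilon(x)=(\varepsilon^2+|x|^2)^{-1/2}$ and $\eta\in C_0^\infty(\Sigma)$, $\eta\geq0$, $\eta\equiv1$ on $B_{R_0}(0)\subset\Sigma$. Since $\mathrm{supp}\,\eta\subset\Sigma=\{f^+\neq0\}$ and $f^-\equiv0$ there, along this ray the sign-changing weight collapses to its positive part: $\int_{\R^3}f_\lambda|tw_\varepsilon|^p=\lambda t^p\int_{\R^3}f^+|w_\varepsilon|^p\geq0$. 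As $I_{f_\lambda}(tw_\varepsilon)\to-\infty$ when $t\to+\infty$, choosing $T$ large with $I_{f_\lambda}(Tw_\varepsilon)<0$ and reparametrising $s\mapsto sTw_\varepsilon$ on $[0,1]$ gives an admissible path in $\widetilde{\Gamma}$, so $c_\lambda\leq\sup_{t\geq0}I_{f_\lambda}(tw_\varepsilon)$.

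Next I would split $I_{f_\lambda}(tw_\varepsilon)=g_\varepsilon(t)-\frac{\lambda t^p}{p}\int_{\R^3}f^+|w_\varepsilon|^p$, where $g_\varepsilon(t)=\frac{t^2}{2}\|w_\varepsilon\|^2+\frac{bt^4}{4}\left(\int_{\R^3}|Dw_\varepsilon|^2\right)^2-\frac{t^6}{6}|w_\varepsilon|_6^6$. Exactly as in Lemma~\ref{lem3.3}, the Brezis--Nirenberg expansions $\eqref{3.7}$--$\eqref{3.8}$ (transported to $w_\varepsilon$ by the scaling $w_\varepsilon=(3\varepsilon^2)^{-1/4}U_\varepsilon\eta$) yield $\max_{t\geq0}g_\varepsilon(t)=c^*+O(\varepsilon)$, the maximiser $t_\varepsilon$ remaining in a fixed compact subset of $(0,\infty)$ after rescaling out the concentration. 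This recovers the critical threshold $c^*$ of Lemma~\ref{lem3.3} along the instanton direction.

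The core of the argument is the perturbation term. Here I would estimate $\int_{\R^3}f^+|w_\varepsilon|^p$ from below, noting that it converges to the positive constant $\int_{B_{R_0}}f^+|x|^{-p}$ when $p\in[2,3)$ and has the corresponding logarithmic ($p=3$) or algebraic ($p\in(3,4)$) rates otherwise, and then invoke the elementary inequality $\frac{b}{12}s^4-\frac{6-p}{6p}\frac{|f^+|_{p_*}}{S^{p/2}}\lambda s^p\geq-C_0\lambda^{\frac{4}{4-p}}$, minimised at $s\sim\lambda^{1/(4-p)}$. This is precisely the computation already used in Lemma~\ref{lem3.2}(ii), and it is what fixes both the constant $C_0$ and the exponent $\frac{4}{4-p}$ appearing in $\eqref{1.11}$. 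Balancing the concentration parameter $\varepsilon$ against $\lambda$ and feeding the outcome into $\sup_{t\geq0}I_{f_\lambda}(tw_\varepsilon)$ should give $c_\lambda\leq c^*-C_0\lambda^{\frac{4}{4-p}}$; the strictness of the inequality together with the positivity $c_2>0$ in $\eqref{1.11}$ then determines the threshold $\lambda^*$.

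The step I expect to be the genuine obstacle — and the reason the statement is confined to $\lambda\in(0,\lambda^*)$ and the introduction speaks of ``careful analysis'' and borrows from \cite{bn2} — is the balancing of scales in the last paragraph. Unlike Lemma~\ref{lem3.3}, where the $\mu>4$ super-linear term $F(u)\geq C|u|^\mu$ produced an error $O(\varepsilon^{(6-\mu)/2})$ with exponent strictly below $1$ and thereby beat the concentration error $O(\varepsilon)$, here the sub-$4$ growth $p\in[2,4)$ makes the gain from $f^+$ weak: tested against the concentrating instanton it is only of order $\lambda\,\varepsilon^{p/2}$ for $p\in[2,3)$ and $\lambda\,\varepsilon^{(6-p)/2}$ for $p\in(3,4)$, both at exponent $\geq1$, hence competing on an equal or worse footing with the $O(\varepsilon)$ error of $\max_t g_\varepsilon$. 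Choosing the cut-off $\eta$ and the coupling $\varepsilon=\varepsilon(\lambda)$ so that the perturbation still pushes the level strictly below $c^*$ by the required amount $C_0\lambda^{\frac{4}{4-p}}$ is the delicate point on which the whole estimate turns.
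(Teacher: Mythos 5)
Your proposal follows the paper's strategy in its setup — the same truncated, \emph{unnormalized} instanton $w_\varepsilon=\eta\widetilde U_\varepsilon$ supported in $\Sigma$, the vanishing of the $f^-$-term as in \eqref{3.14}, and the expansion $\max_{t\geq0}h(t)=c^*+O(\varepsilon)$ of \eqref{3.12} — but it stops exactly where the lemma has to be won, so it does not prove the statement. The paper's decisive step, which never appears in your proposal, is \eqref{3.15}: for $\varepsilon\leq R_0$ one has $\widetilde U_\varepsilon(x)=(\varepsilon^2+|x|^2)^{-1/2}\geq (2R_0^2)^{-1/2}$ on $B_{R_0}(0)$, hence $\int_{\R^3}f^+(x)|w_\varepsilon|^p\geq (2R_0^2)^{-p/2}\int_{B_{R_0}(0)}f^+\triangleq C_*>0$, a lower bound \emph{independent of} $\varepsilon$, precisely because the unnormalized instanton does not decay pointwise. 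The paper then splits the ray at a small $t_0$: on $[0,t_0]$ it invokes \eqref{3.13}, while for $t>t_0$ it obtains in \eqref{3.16} a gain $\frac{t_0^pC_*}{p}\lambda$ that is \emph{linear} in $\lambda$; with the coupling $\varepsilon=\lambda^{\frac{4}{4-p}}$, whose exponent is $\geq2$, this linear gain swamps both the $O(\varepsilon)=O(\lambda^{\frac{4}{4-p}})$ error and the target deficit $C_0\lambda^{\frac{4}{4-p}}$, which is the content of \eqref{3.17}. Your proposal never produces any gain of order lower than $\lambda^{\frac{4}{4-p}}$: you place the gain at $O(\lambda\varepsilon^{p/2})$ (resp. $O(\lambda\varepsilon^{(6-p)/2})$), observe correctly that such a term cannot beat $O(\varepsilon)$ when $p\geq2$, and then declare the balancing ``the delicate point'' without resolving it. Two further inaccuracies: the Young-type inequality you import from Lemma \ref{lem3.2}(ii) runs in the wrong direction here — it bounds functionals from \emph{below} and is what dictates the value of $C_0$ in the compactness argument of Lemma \ref{lem3.9}, whereas inside the present lemma $C_0$ is merely a constant to be beaten; and your two descriptions of $\int f^+|w_\varepsilon|^p$ (a positive constant in your third paragraph, $\varepsilon^{p/2}$-decay in your last) contradict each other, because you silently switch from $\widetilde U_\varepsilon$ to the normalized instanton $U_\varepsilon$ of \eqref{3.6}, \eqref{3.8}.

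That said, your closing observation is sharp, and it touches a point the paper glosses over rather than answers. Since $\{t\widetilde U_\varepsilon\eta\}_{t>0}$ and $\{tU_\varepsilon\eta\}_{t>0}$ are the same ray, the maximizer of $h$ sits at $t\sim\varepsilon^{1/2}$; consequently a $t_0$ for which \eqref{3.13} can hold must shrink like $\varepsilon^{1/2}$, and then the gain $\frac{t_0^pC_*}{p}\lambda$ in \eqref{3.16} degrades to order $\lambda\varepsilon^{p/2}$ — exactly your obstruction — so that \eqref{3.17} no longer follows for small $\lambda$ unless one can also control the sign of the $O(\varepsilon)$ term (which the paper does not attempt, and which contains the strictly positive $L^2$-contribution of the cut-off). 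In other words, you have correctly located a genuine soft spot in the paper's own proof: the tacit treatment of $t_0$ as independent of $\varepsilon$. But identifying the obstacle is not overcoming it. A complete proof must either justify a $t_0$ bounded below uniformly in $\varepsilon$, or exploit a sign property of the error, or find a different mechanism; your proposal supplies none of these, so it remains incomplete at the decisive step.
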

\begin{proof}~~
Since $\Sigma=\{x\in\R^3|~f^+(x)\neq0\}$ is a nonempty domain, we may assume that $0\in \Sigma$. Let $R_0>0$ be a constant such that $B_{R_0}(0)\subset\Sigma$. Following \cite{bn2}, let $\eta\in C^\infty_0(\Sigma)$ satisfy
$$\eta\geq0~~~~\hbox{and}~~~~\eta|_{B_{R_0}(0)}\equiv1.$$
We consider the following
function
$$w_{\varepsilon}(x)=\eta(x)\widetilde{U}_{\varepsilon}(x),~~x\in\R^3,$$
where
$\widetilde{U}_{\varepsilon}(x)=\frac{U_{\varepsilon}(x)}{(3\varepsilon^2)^{\frac{1}{4}}}=\frac{1}{(\varepsilon^2+|x|^2)^{\frac{1}{2}}}$. By \cite{bn}, we have that
$$\ds\int_{\R^3}|Dw_{\varepsilon}|^2=(3\varepsilon^2)^{-\frac{1}{2}}[B+O(\varepsilon)],~~~~\ds\int_{\R^3}|w_{\varepsilon}|^6=(3\varepsilon^2)^{-\frac{3}{2}}[A+O(\varepsilon^3)],$$
and
$\int_{\R^3}|w_{\varepsilon}|^2=(3\varepsilon^2)^{-\frac{1}{2}}O(\varepsilon)$,
where $B=\int_{\R^3}|DU_1|^2,$ $A=\int_{\R^3}|U_1|^6$ and
$S=BA^{-\frac{1}{3}}.$

Let
$$h(t)=\frac{t^2}{2}\|w_{\varepsilon}\|^2+\frac{t^4}{4}b\left(\ds\int_{\R^3}|Dw_{\varepsilon}|^2\right)^2-\frac{t^6}{6}\ds\int_{\R^3}|w_{\varepsilon}|^6,~~\forall~t\geq0.$$
Then \begin{equation}\label{3.12}
\begin{array}{ll}
\max\limits_{t\geq0}h(t)&=\frac{b\|w_{\varepsilon}\|^2\left(\int_{\R^3}|Dw_{\varepsilon}|^2\right)^2}{4\int_{\R^3}|w_{\varepsilon}|^6}+\frac{b^3\left(\int_{\R^3}|Dw_{\varepsilon}|^2\right)^6}{24\left(\int_{\R^3}|w_{\varepsilon}|^6\right)^2}+\frac{\left[b^2\left(\int_{\R^3}|Dw_{\varepsilon}|^2\right)^4+4\|w_{\varepsilon}\|^2\int_{\R^3}|w_{\varepsilon}|^6\right]^{\frac{3}{2}}}{24\left(\int_{\R^3}|w_{\varepsilon}|^6\right)^2}\\[5mm]
&=\frac{ab}{4}\frac{(B+O(\varepsilon))^3}{A+O(\varepsilon^3)}+\frac{b^3}{24}\frac{(B+O(\varepsilon))^6}{(A+O(\varepsilon^3))^2}+\frac{1}{24}\frac{[b^2(B+O(\varepsilon))^4+4a(B+O(\varepsilon))(A+O(\varepsilon^3))]^{\frac{3}{2}}}{(A+O(\varepsilon^3))^2}\\[5mm]
&=\frac{abB^3A^{-1}}{4}+\frac{b^3B^6A^{-2}}{24}+\frac{(b^2B^4A^{-\frac{4}{3}}+4aBA^{-\frac{1}{3}})^{\frac{3}{2}}}{24}+O(\varepsilon)+O(\varepsilon^3)\\[5mm]
&=\frac{abS^3}{4}+\frac{b^3S^6}{24}+\frac{(b^2S^4+4aS)^{\frac{3}{2}}}{24}+O(\varepsilon)+O(\varepsilon^3)
\end{array}
\end{equation} for $\varepsilon>0$ is small.

For $c^*$ given in Lemma 3.3 and $C_0>0$, we can choose $\lambda_1>0$ such
that for any $\lambda\in (0,\lambda_1)$,
$$c^*-C_0\lambda^{\frac{4}{4-p}}>0.$$
By $p\in[2,4)$, we see that
$\lim\limits_{t\rightarrow+\infty}I_{f_\lambda}(tw_{\varepsilon})=-\infty$
and
$$I_{f_\lambda}(tw_{\varepsilon})\leq\frac{t^2}{2}\|w_{\varepsilon}\|^2+\frac{bt^4}{4}\left(\ds\int_{\R^3}|Dw_{\varepsilon}|^2\right)^2+\frac{t^p}{p}\ds\int_{\R^3}f^-(x)|w_{\varepsilon}|^p~~\hbox{for~all~}t\geq0,~\lambda>0,$$
which implies that there exists $t_0>0$ small such that
\begin{equation}\label{3.13}
\sup_{0\leq t\leq
t_0}I_{f_{\lambda}}(tw_{\varepsilon})<c^*-C_0\lambda^{\frac{4}{4-p}}~~\hbox{for~all}~\lambda\in(0,\lambda_1).
\end{equation}
We next consider the case where $t>t_0$. By $f^-=0~~\hbox{in}~\Sigma~~\hbox{and}~~w_{\varepsilon}=0~~\hbox{in}~\R^3\backslash\Sigma,$
then we see that
\begin{equation}\label{3.14}
\ds\int_{\R^3}f^-(x)|w_{\varepsilon}|^p=0.
\end{equation}
Let $\varepsilon\leq R_0$, then
\begin{equation}\label{3.15}
\begin{array}{ll}
\ds\int_{\R^3}f^+(x)|w_{\varepsilon}|^p&=\ds\int_{\Sigma}f^+(x)|w_{\varepsilon}|^p\\[5mm]
&\geq\ds\int_{B_{R_0}(0)}f^+(x)\frac{1}{(\varepsilon^2+|x|^2)^{\frac{p}{2}}}\\[5mm]
&\geq\frac{1}{(2R_0^2)^{\frac{p}{2}}}\ds\int_{B_{R_0}(0)}f^+(x)\triangleq
C_*=C_*(R_0,f^+,\Sigma,p),
\end{array}
\end{equation}
where $f^+\in L^{p_*}(\R^3)$ implies that $f^+\in L^1_{loc}(\R^3)$.

By \eqref{3.12},\eqref{3.14},\eqref{3.15}, for all
$\varepsilon=\lambda^{\frac{4}{4-p}}\in (0,R_0)$ and $t>t_0$, we
have that
\begin{equation}\label{3.16}
\begin{array}{ll}
I_{f_\lambda}(tw_{\varepsilon})&=\frac{t^2}{2}\|w_{\varepsilon}\|^2+\frac{bt^4}{4}\left(\ds\int_{\R^3}|Dw_{\varepsilon}|^2\right)^2-\frac{t^p}{p}\ds\int_{\R^3}f_{\lambda}(x)|w_{\varepsilon}|^p-\frac{t^6}{6}\ds\int_{\R^3}|w_{\varepsilon}|^6\\[5mm]
&\triangleq
h(t)-\lambda\frac{t^p}{p}\ds\int_{\R^3}f^{+}(x)|w_{\varepsilon}|^p\\[5mm]
&\leq\frac{abS^3}{4}+\frac{b^3S^6}{24}+\frac{(b^2S^4+4aS)^{\frac{3}{2}}}{24}+O(\varepsilon)-\frac{t_0^pC_*}{p}\lambda\\[5mm]
&=c^*+O(\lambda^{\frac{4}{4-p}})-\frac{t_0^pC_*}{p}\lambda.
\end{array}
\end{equation}
As $p\in[2,4)$, $\frac{4}{4-p}\geq2$. Then there exists a
$\lambda_2>0$ small such that for all
$\lambda\in(0,\lambda_2)$, we have that
\begin{equation}\label{3.17}
O(\lambda^{\frac{4}{4-p}})-\frac{t_0^pC_*}{p}\lambda<-C_0\lambda^{\frac{4}{4-p}}.
\end{equation}
Set
$\lambda_3=\min\{\lambda_1,\lambda_2,R_0^{\frac{4-p}{4}}\}$
and $\varepsilon=\lambda^{\frac{4}{4-p}}$, then by \eqref{3.13}\eqref{3.16}\eqref{3.17}, for all $\lambda\in(0,\lambda_3)$, we
have that
$$\sup_{t\geq0}I_{f_\lambda}(tw_{\varepsilon})\leq
C^*-C_0\lambda^{\frac{4}{4-p}}.$$ If
$$\lambda^*\triangleq\min\{\lambda_0,\lambda_3\}~~\hbox{for}~p=2~~~~\hbox{or}~~~\lambda^*\triangleq\lambda_3~~\hbox{for}~p\in(2,4),$$
then by Lemma 3.1, for all $\lambda\in(0,\lambda^*)$, we have
$$c_\lambda\leq\sup_{t\geq0}I_{f_\lambda}(tw_{\varepsilon})\leq
c^*-C_0\lambda^{\frac{4}{4-p}}.$$
\end{proof}

We need the following three lemmas to prove Theorem 1.3.
\begin{lem}\label{lem3.5} (\cite{wi}, Lemma
1.21)\ \
Let $r>0$ and $2\leq q<2^*$. If $\{u_n\}$ is bounded in
$H^1(\R^N)$ and
$$\sup\limits_{y\in\R^N}\ds\int_{B_r(y)}|u_n|^q\rightarrow0,~~n\rightarrow+\infty,$$
then $u_n\rightarrow0$ in $L^s(\R^N)$ for $2<s<2^*$.
\end{lem}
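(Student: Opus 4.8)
The plan is to first prove the conclusion for one conveniently chosen exponent $s_0\in(q,2^*)$ by a local-to-global covering argument, and then to recover the whole range $2<s<2^*$ by interpolation against the uniform $H^1$-bound. Two standing ingredients are needed. Since $\{u_n\}$ is bounded in $H^1(\R^N)$, it is bounded in $L^2(\R^N)$ and in $L^{2^*}(\R^N)$; set $M\triangleq\sup_n\|u_n\|_{H^1(\R^N)}<\infty$ and $\delta_n\triangleq\sup_{y\in\R^N}\int_{B_r(y)}|u_n|^q\to0$. By translation invariance the Sobolev inequality holds on every ball $B_r(y)$ with one and the same constant $C_S=C_S(N,r)$, namely $\left(\int_{B_r(y)}|u|^{2^*}\right)^{2/2^*}\le C_S\int_{B_r(y)}(|Du|^2+|u|^2)$. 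I would then fix $s_0\in(q,2^*)$ so that, in the interpolation $\frac{1}{s_0}=\frac{1-\theta}{q}+\frac{\theta}{2^*}$, the weight is $\theta=\frac{2}{s_0}$; explicitly $s_0=q+2-\frac{2q}{2^*}$, which lies in $(q,2^*)$ precisely because $q<2^*$. The virtue of this choice is that Hölder interpolation on a ball reads $\int_{B_r(y)}|u|^{s_0}\le\big(\int_{B_r(y)}|u|^q\big)^{\frac{s_0-2}{q}}\big(\int_{B_r(y)}|u|^{2^*}\big)^{2/2^*}$, so the last factor is controlled by $C_S\int_{B_r(y)}(|Du|^2+|u|^2)$ to the first power.

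Next I would cover $\R^N$ by balls $\{B_r(y_i)\}_{i\in\mathbb N}$ enjoying the finite-overlap property that every point of $\R^N$ belongs to at most $k=k(N)$ of them. Feeding the Sobolev bound into the interpolation inequality and estimating the $L^q$ factor by its supremum gives, for each $i$,
$$\int_{B_r(y_i)}|u_n|^{s_0}\le C_S\,\delta_n^{\frac{s_0-2}{q}}\int_{B_r(y_i)}(|Du_n|^2+|u_n|^2).$$
Summing over $i$, using the finite overlap in the form $\sum_i\int_{B_r(y_i)}(|Du_n|^2+|u_n|^2)\le k\,\|u_n\|_{H^1(\R^N)}^2$ and the fact that the balls cover $\R^N$, I obtain
$$\int_{\R^N}|u_n|^{s_0}\le\sum_i\int_{B_r(y_i)}|u_n|^{s_0}\le C_S\,k\,M^2\,\delta_n^{\frac{s_0-2}{q}}.$$
Since $s_0>2$ the exponent $\frac{s_0-2}{q}$ is strictly positive, and $\delta_n\to0$, so $u_n\to0$ in $L^{s_0}(\R^N)$.

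Finally I would propagate this to every $s\in(2,2^*)$ by interpolation with the standing bounds: for $s\in(2,s_0)$ write $|u_n|_s\le|u_n|_2^{1-t}\,|u_n|_{s_0}^{t}$, and for $s\in(s_0,2^*)$ write $|u_n|_s\le|u_n|_{s_0}^{1-t}\,|u_n|_{2^*}^{t}$, with the appropriate $t\in(0,1)$ in each case. In both cases $|u_n|_2$ and $|u_n|_{2^*}$ stay bounded by a multiple of $M$ while $|u_n|_{s_0}\to0$, whence $|u_n|_s\to0$. The only genuinely non-routine point is the covering: one must choose the balls so that both the overlap multiplicity $k$ and the Sobolev constant $C_S$ on $B_r(y)$ are independent of $y$ (and of $n$), since this is exactly what lets the local estimates be summed into a single global estimate; the rest is Hölder interpolation together with the boundedness of $\{u_n\}$ in $H^1(\R^N)$.
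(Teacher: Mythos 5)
Your proof is correct, and it is essentially the canonical one: the paper itself offers no proof of this lemma, citing it directly as Lemma 1.21 of Willem \cite{wi}, and your argument --- local H\"older interpolation between $L^q$ and $L^{2^*}$ with the weight chosen so that $\theta s_0=2$ (making the Sobolev factor enter to the first power), a translation-invariant Sobolev constant on balls of radius $r$, a finite-overlap covering to sum the local bounds into $\int_{\R^N}|u_n|^{s_0}\leq C\,k\,M^2\,\delta_n^{(s_0-2)/q}$, and then interpolation against the bounded $L^2$ and $L^{2^*}$ norms to reach every $s\in(2,2^*)$ --- is exactly the proof given in that reference. All the details check out: $s_0=q+2-\frac{2q}{2^*}$ does lie in $(q,2^*)$ and exceeds $2$, so the exponent $\frac{s_0-2}{q}$ is positive, and the lattice-type covering with overlap multiplicity depending only on $N$ is standard.
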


\begin{lem}\label{lem3.6}\ \
For $t,~s>0$, the following system
$$\left\{%
\begin{array}{ll}
    f(t,s)\triangleq t-aS(t+s)^{\frac{1}{3}}=0, &  \\
    g(t,s)\triangleq s-bS^2(t+s)^{\frac{2}{3}}=0. &\\
\end{array}%
\right. $$ has a unique solution $(t_0,s_0)$. Moreover,
\begin{equation}\label{3.18}\left\{%
\begin{array}{ll}
    f(t,s)\geq0, &  \\
    g(t,s)\geq0, &\\
\end{array}%
\right.\Rightarrow~~
\left\{%
\begin{array}{ll}
    t\geq t_0, &  \\
    s\geq s_0. &\\
\end{array}%
\right.\end{equation}
\end{lem}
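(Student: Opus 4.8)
The plan is to collapse the two-variable system into a single scalar equation by introducing the sum $u = t+s$. First I would observe that any solution of $f(t,s)=g(t,s)=0$ must satisfy $t = aS u^{1/3}$ and $s = bS^2 u^{2/3}$ with $u=t+s$; adding these two relations yields the self-consistency equation $u = aS u^{1/3} + bS^2 u^{2/3}$. Setting $v = u^{1/3} > 0$ and dividing by $v$ turns this into the quadratic $v^2 - bS^2 v - aS = 0$, which has exactly one positive root $v_0 = \frac{1}{2}\left(bS^2 + \sqrt{b^2S^4+4aS}\right)$. Hence $u_0 = v_0^3$ is uniquely determined, and so are $t_0 = aS v_0$ and $s_0 = bS^2 v_0^2$, both clearly positive; a direct substitution confirms that $(t_0,s_0)$ solves the system. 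This gives both existence and uniqueness.

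For the implication \eqref{3.18}, I would suppose $f(t,s)\geq0$ and $g(t,s)\geq0$ for some $t,s>0$, i.e. $t \geq aS(t+s)^{1/3}$ and $s \geq bS^2(t+s)^{2/3}$. Adding these and writing $v = (t+s)^{1/3}>0$ gives $v^3 \geq aS v + bS^2 v^2$, hence $v^2 - bS^2 v - aS \geq 0$. Since the left-hand side is an upward parabola in $v$ with a single positive root $v_0$, positivity of $v$ forces $v \geq v_0$, that is $t+s \geq t_0+s_0$. Feeding this back into the two inequalities and using $(t+s)^{1/3}=v \geq v_0$ together with $(t+s)^{2/3}=v^2\geq v_0^2$, I obtain $t \geq aS v \geq aS v_0 = t_0$ and $s \geq bS^2 v^2 \geq bS^2 v_0^2 = s_0$, which is exactly the desired conclusion.

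The computation is elementary once the substitution $v=(t+s)^{1/3}$ is made; the only point that requires a moment's care is the sign analysis of the quadratic $v^2 - bS^2 v - aS$, where one must note that the constant term $-aS$ is negative, so there is one negative and one positive root, and therefore the inequality $\ge 0$ combined with $v>0$ pins $v$ to the right of the positive root rather than merely outside the interval of both roots. No compactness or deep machinery is needed here; the main (and very mild) obstacle is simply arranging the reduction to one variable so that the uniqueness claim and the monotonicity implication both emerge from the same quadratic.
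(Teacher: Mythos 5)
Your proof is correct, and it follows the same basic strategy as the paper --- collapsing the two-variable system to a scalar condition on the sum $t+s$ --- but your execution is cleaner and, in one respect, more complete. The paper obtains $(t_0,s_0)$ by eliminating $s_0$ via $t_0+s_0=t_0^3/(a^3S^3)$ and solving the resulting cubic-type equation, and for the implication \eqref{3.18} it invokes, \emph{without proof}, the ``fact'' that $h(l)=l-aSl^{\frac{1}{3}}-bS^2l^{\frac{2}{3}}$ has a unique positive zero $l_0$ with $h(l)\geq0\Rightarrow l\geq l_0$. Your substitution $v=(t+s)^{\frac{1}{3}}$ reduces exactly this fact to the sign analysis of the quadratic $v^2-bS^2v-aS$, whose unique positive root you identify, so your argument actually supplies the justification the paper omits; indeed $h(l)=l^{\frac{1}{3}}\bigl(l^{\frac{2}{3}}-bS^2l^{\frac{1}{3}}-aS\bigr)$, so the two statements are equivalent. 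The final step differs only cosmetically: the paper argues by contradiction (if $t<t_0$ then $f(t,s)<t_0-aS(t_0+s_0)^{\frac{1}{3}}=0$), while you run the same inequalities forward ($t\geq aSv\geq aSv_0=t_0$, and similarly for $s$); these are interchangeable. One can also check that your closed forms agree with the paper's: $aSv_0=\frac{abS^3+a\sqrt{b^2S^6+4aS^3}}{2}=t_0$ and, using $v_0^2=bS^2v_0+aS$, $bS^2v_0^2=\frac{b^3S^6+2abS^3+b^2S^3\sqrt{b^2S^6+4aS^3}}{2}=s_0$.
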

\begin{proof}~~
If $f(t_0,s_0)=g(t_0,s_0)=0$, then $t_0+s_0=\frac{t_0^3}{a^3S^3}$. It is enough to solve the following equation
$$
\left(\frac{t_0^3-a^3S^3t_0}{a^3S^3}\right)^3=b^3S^6\left(\frac{t_0^3}{a^3S^3}\right)^2.
$$
Then
 $$t_0=\frac{abS^3+a\sqrt{b^2S^6+4aS^3}}{2}$$ and
$$s_0=\frac{b^3S^6+2abS^3+b^2S^3\sqrt{b^2S^6+4aS^3}}{2}.$$
If $f(t,s)\geq0$ and $g(t,s)\geq0$, then
$$t+s\geq aS(t+s)^{\frac{1}{3}}+bS^2(t+s)^{\frac{2}{3}},$$
hence
$$t+s\geq t_0+s_0,$$ where we have used a fact that the function $h(l)\triangleq l-aSl^{\frac{1}{3}}-bS^2l^{\frac{2}{3}},~l>0$ has a unique zero point $l_0>0$ and $h(l)\geq0\Rightarrow l\geq l_0$.

Just suppose that $t<t_0$, then
$$f(t,s)=t-aS(t+s)^{\frac{1}{3}}<t_0-aS(t_0+s_0)^{\frac{1}{3}}=0,$$
which is impossible, so $t\geq t_0$. Similarly, $s\geq s_0$. The Lemma is proved.
\end{proof}

\begin{lem}\label{lem3.7}\ \
Let $\alpha\in(0,c^*)$ and $\{u_n\}\subset H^1(\R^3)$ be a bounded
$(PS)_\alpha$ sequence for $I$, then there exists a sequence
$\{y_n\}\subset \R^3$ and constants $R,~\sigma>0$ such that
$$\liminf\limits_{n\rightarrow\infty}\ds\int_{B_R(y_n)}|u_n|^2\geq\sigma>0.$$
\end{lem}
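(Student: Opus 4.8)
The plan is to argue by contradiction via the vanishing alternative and Lemma~\ref{lem3.5}. Suppose the asserted non-vanishing fails, i.e. for every $R>0$ one has $\sup_{y\in\R^3}\int_{B_R(y)}|u_n|^2\to 0$ as $n\to\infty$ (up to passing to a subsequence, this is precisely the negation). Applying Lemma~\ref{lem3.5} with $q=2$ then gives $u_n\to 0$ in $L^s(\R^3)$ for every $s\in(2,6)$. The first technical step is to record the growth bounds coming from $(f_1)$ and $(f_3)$: for each $\epsilon>0$ there is $C_\epsilon>0$ with $|f(t)t|\le \epsilon(|t|^4+|t|^6)+C_\epsilon|t|^q$ and $|F(t)|\le \epsilon(|t|^4+|t|^6)+C_\epsilon|t|^q$ for a fixed $q\in(2,6)$. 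Since $\{u_n\}$ is bounded in $H^1(\R^3)$, hence in $L^4$ and $L^6$, while $u_n\to 0$ in $L^q$, these bounds force $\int_{\R^3}f(u_n)u_n\to 0$ and $\int_{\R^3}F(u_n)\to 0$.

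Next I would extract the limiting energy identities. Passing to a subsequence, set $D_n=\int_{\R^3}|Du_n|^2\to d$, $P_n=\int_{\R^3}|u_n|^2\to p$ and $Q_n=\int_{\R^3}|u_n|^6\to\ell$ with $d,p,\ell\ge 0$. From $\langle I^\prime(u_n),u_n\rangle\to 0$ together with $\int_{\R^3}f(u_n)u_n\to 0$ I get $aD_n+P_n+bD_n^2-Q_n\to 0$, i.e. $\ell=ad+p+bd^2$; from $I(u_n)\to\alpha$ together with $\int_{\R^3}F(u_n)\to 0$, and eliminating $\ell$, I obtain $\alpha=\tfrac{a}{3}d+\tfrac13 p+\tfrac{b}{12}d^2$. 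The Sobolev inequality \eqref{3.2} gives $Q_n\le S^{-3}D_n^3$, hence $\ell\le S^{-3}d^3$ in the limit. If $d=0$ these relations force $p=\ell=0$ and then $\alpha=0$, contradicting $\alpha>0$; so $d>0$.

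The crux is to read off the threshold $c^*$ from these identities, and this is exactly where Lemma~\ref{lem3.6} enters. I would set $t=ad>0$ and $s=bd^2+p>0$, so that $t+s=\ell$. The constraint $\ell\le S^{-3}d^3$ is equivalent to $d\ge S(t+s)^{\frac13}$, which yields $f(t,s)=t-aS(t+s)^{\frac13}=a\bigl(d-S(t+s)^{\frac13}\bigr)\ge 0$; likewise $\ell^{\frac23}\le S^{-2}d^2$ gives $g(t,s)=s-bS^2(t+s)^{\frac23}=bd^2+p-bS^2\ell^{\frac23}\ge p\ge 0$. By the implication \eqref{3.18} of Lemma~\ref{lem3.6} we conclude $t\ge t_0$ and $s\ge s_0$. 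Finally, rewriting $\alpha=\tfrac13 t+\tfrac14 p+\tfrac{1}{12}s\ge \tfrac13 t_0+\tfrac{1}{12}s_0$, a direct computation with the explicit values of $t_0,s_0$ shows $\tfrac13 t_0+\tfrac{1}{12}s_0=c^*$, so $\alpha\ge c^*$, contradicting $\alpha\in(0,c^*)$. I expect the main obstacle to be the bookkeeping that recasts the Sobolev constraint $\ell\le S^{-3}d^3$ into the precise form demanded by Lemma~\ref{lem3.6}, together with the verification that $\tfrac13 t_0+\tfrac{1}{12}s_0$ collapses exactly to $c^*$; the rest is routine once $\int_{\R^3}f(u_n)u_n$ and $\int_{\R^3}F(u_n)$ are shown to vanish.
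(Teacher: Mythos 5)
Your proposal is correct and takes essentially the same route as the paper: argue by contradiction, use Lemma 3.5 to get $u_n\rightarrow 0$ in $L^s(\R^3)$ for $s\in(2,6)$ so that $\int_{\R^3}F(u_n)\rightarrow 0$ and $\int_{\R^3}f(u_n)u_n\rightarrow 0$, pass to the limiting system coming from $I(u_n)\rightarrow\alpha$ and $\langle I^\prime(u_n),u_n\rangle\rightarrow 0$, feed the Sobolev constraints into Lemma 3.6, and conclude $\alpha\geq \frac{1}{3}t_0+\frac{1}{12}s_0=c^*$, a contradiction. The only immaterial difference is bookkeeping: the paper applies Lemma 3.6 to $l_1=\lim\|u_n\|^2$ and $l_2=\lim b\left(\int_{\R^3}|Du_n|^2\right)^2$, so the $L^2$ mass sits in the first variable, whereas you absorb $p=\lim|u_n|_2^2$ into the second variable via $s=bd^2+p$; both choices verify the hypotheses of Lemma 3.6 and give the same threshold $c^*$.
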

\begin{proof}~~
Just suppose that the conclusion does not hold, then by
Lemma 3.5, we see that $u_n\rightarrow0$ in $L^s(\R^3)$ for
$s\in(2,6)$, hence by $(f_1)(f_3)$, $\int_{\R^3}F(u_n)\rightarrow0$ and
$\int_{\R^3}f(u_n)u_n\rightarrow0$. Since $\{u_n\}$ is a bounded
$(PS)_\alpha$ sequence, we see that
\begin{equation}\label{3.19}
\alpha+o(1)=I(u_n)=\frac{1}{2}\|u_n\|^2+\frac{b}{4}\left(\ds\int_{\R^3}|Du_n|^2\right)^2-\frac{1}{6}\ds\int_{\R^3}|u_n|^6+o(1),
\end{equation}
\begin{equation}\label{3.20}
o(1)=\langle
I^\prime(u_n),u_n\rangle=\|u_n\|^2+b\left(\ds\int_{\R^3}|Du_n|^2\right)^2-\ds\int_{\R^3}|u_n|^6+o(1),
\end{equation}
where $o(1)\rightarrow0$ as $n\rightarrow\infty$. By \eqref{3.20},
we may assume that
$$\|u_n\|^2\rightarrow
l_1,~~~~b\left(\ds\int_{\R^3}|Du_n|^2\right)^2\rightarrow
l_2,~~~~\ds\int_{\R^3}|u_n|^6\rightarrow l_3.$$ Then by
\eqref{3.19}\eqref{3.20}, we have that
\begin{equation}\label{3.21}
\left\{%
\begin{array}{ll}
    l_1+l_2-l_3=0, &  \\
    \frac{1}{2}l_1+\frac{1}{4}l_2-\frac{1}{6}l_3=\alpha. &\\
\end{array}%
\right.
\end{equation}
So
\begin{equation}\label{3.22}
\alpha=\frac{1}{3}l_1+\frac{1}{12}l_2.
\end{equation}
Then $\alpha>0$ implies that
$l_1,l_2,l_3>0.$
By \eqref{3.2}, we have that
$$\ds\int_{\R^3}|Du_n|^2\geq
S\left(\ds\int_{\R^3}|u_n|^6\right)^{\frac{1}{3}}~~~~\hbox{and}~~~~b\left(\ds\int_{\R^3}|Du_n|^2\right)^2\geq
bS^2\left(\ds\int_{\R^3}|u_n|^6\right)^{\frac{2}{3}}.$$ Then
$$l_1\geq aS(l_1+l_2)^{\frac{1}{3}}~~\hbox{and}~~l_2\geq
bS^2(l_1+l_2)^{\frac{2}{3}}.$$ By Lemma 3.6, we have that
\begin{equation}\label{3.23}
\begin{array}{ll}
\alpha&=\frac{1}{3}l_1+\frac{1}{12}l_2\\[5mm]
&\geq\frac{1}{3}\frac{abS^3+a\sqrt{b^2S^6+4aS^3}}{2}+\frac{1}{12}
\frac{b^3S^6+2abS^3+b^2S^3\sqrt{b^2S^6+4aS^3}}{2}\\[5mm]
&=\frac{abS^3}{4}+\frac{b^3S^6}{24}+\frac{(b^2S^4+4aS)^{\frac{3}{2}}}{24}=c^*,
\end{array}
\end{equation}
which contradicts to $\alpha<c^*$. So the lemma is proved.
\end{proof}
\begin{lem}\label{3.8}(Lemma 1.32, \cite{wi})\ \
Let $\Omega$ be an open subset of $\R^N$  and let $\{u_n\}\subset L^p(\Omega),$ $1\leq p<\infty$. If $\{u_n\}$ is bounded in $L^p(\Omega)$ and $u_n\rightarrow u$ a.e. on $\Omega$, then
$$\lim\limits_{n\rightarrow\infty}(|u_n|_p^p-|u_n-u|_p^p)=|u|_p^p.$$
\end{lem}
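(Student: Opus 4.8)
This is the Brezis--Lieb lemma, so the plan is to reproduce its standard proof via a truncation argument and dominated convergence. The whole statement is equivalent to showing that
$$f_n \triangleq \Big| |u_n|^p - |u_n-u|^p - |u|^p \Big| \longrightarrow 0 \quad \text{in } L^1(\Omega),$$
since $\big| |u_n|_p^p - |u_n-u|_p^p - |u|_p^p \big| \le \int_\Omega f_n$. First I would record that $u \in L^p(\Omega)$: indeed $u_n \to u$ a.e.\ together with the uniform bound $\sup_n |u_n|_p < \infty$ gives $|u|_p \le \liminf_{n\to\infty} |u_n|_p < \infty$ by Fatou's lemma. In particular $\int_\Omega |u|^p$ is finite and $\sup_n \int_\Omega |u_n-u|^p < \infty$.

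The engine of the proof is the elementary inequality: for every $\varepsilon > 0$ there is a constant $C_\varepsilon > 0$ such that
$$\Big| |\alpha+\beta|^p - |\alpha|^p \Big| \le \varepsilon |\alpha|^p + C_\varepsilon |\beta|^p \quad \text{for all } \alpha,\beta \in \R,$$
which follows from a standard homogeneity-plus-continuity (compactness on the sphere $|\alpha|^p+|\beta|^p=1$) argument. Applying it with $\alpha = u_n - u$ and $\beta = u$, and using the triangle inequality to insert the $-|u|^p$ term, yields the pointwise bound
$$f_n \le \varepsilon |u_n-u|^p + (C_\varepsilon + 1)|u|^p.$$

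Now comes the only nonroutine step, the truncation trick. Set
$$W_n^\varepsilon \triangleq \Big( f_n - \varepsilon |u_n-u|^p \Big)^+.$$
From the bound just obtained, $0 \le W_n^\varepsilon \le (C_\varepsilon + 1)|u|^p \in L^1(\Omega)$, so $W_n^\varepsilon$ has a fixed integrable majorant independent of $n$. Since $u_n \to u$ a.e., one checks $f_n \to 0$ a.e., hence $W_n^\varepsilon \to 0$ a.e.; Lebesgue's dominated convergence theorem then gives $\int_\Omega W_n^\varepsilon \to 0$. Because $f_n \le W_n^\varepsilon + \varepsilon |u_n-u|^p$, integrating and taking $\limsup_{n\to\infty}$ gives
$$\limsup_{n\to\infty} \int_\Omega f_n \le \varepsilon \, \sup_n \int_\Omega |u_n-u|^p \le C\varepsilon,$$
with $C$ finite by the first paragraph. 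Letting $\varepsilon \downarrow 0$ forces $\int_\Omega f_n \to 0$, which is the claim. The main obstacle is exactly this passage to the limit: $f_n$ itself admits no obvious $n$-independent integrable majorant, since $|u_n-u|^p$ is only bounded in $L^1(\Omega)$ rather than dominated, so a direct use of dominated convergence fails; the splitting parameter $\varepsilon$ together with the truncation $W_n^\varepsilon$ is what circumvents this difficulty.
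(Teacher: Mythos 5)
Your proof is correct: it is the standard Brezis--Lieb argument (the elementary inequality $\bigl||\alpha+\beta|^p-|\alpha|^p\bigr|\le\varepsilon|\alpha|^p+C_\varepsilon|\beta|^p$, the truncation $W_n^\varepsilon=(f_n-\varepsilon|u_n-u|^p)^+$ dominated by $(C_\varepsilon+1)|u|^p$, and dominated convergence). The paper gives no proof of its own but simply cites Lemma 1.32 of Willem's book, whose proof is exactly the argument you reproduce, so your approach coincides with the cited source.
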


We need the following compactness lemma to prove Theorem 1.4.

\begin{lem}\label{lem3.9}\ \
Let $\lambda^*$ be defined in Lemma 3.4, then for
$\lambda\in(0,\lambda^*),$ $I_{f_\lambda}$ satisfies $(PS)_{c_\lambda}$ condition.
\end{lem}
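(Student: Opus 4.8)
The plan is to show that every $(PS)_{c_\lambda}$ sequence $\{u_n\}$ for $I_{f_\lambda}$ has a strongly convergent subsequence. First I would apply Lemma 3.2(ii) to get that $\{u_n\}$ is bounded, so up to a subsequence $u_n\rightharpoonup u$ in $H^1(\R^3)$, $u_n\to u$ in $L^q_{loc}(\R^3)$ for $q\in[2,6)$ and a.e., and $\int_{\R^3}|Du_n|^2\to A^2$ for some $A\ge0$. The first genuinely useful step is to \emph{freeze the nonlocal coefficient}: passing to the limit in $\langle I_{f_\lambda}'(u_n),\varphi\rangle\to0$, using that $f^\pm\in L^{p_*}(\R^3)$ makes the weighted subcritical term $\int f_\lambda|u_n|^{p-2}u_n\varphi$ compact and that $|u_n|^4u_n\rightharpoonup|u|^4u$ in $L^{6/5}(\R^3)$, I obtain that $u$ is a critical point of the frozen functional $J_{f_\lambda}$ with coefficient $a+bA^2$, i.e. $J_{f_\lambda}'(u)=0$. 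This circumvents the failure of weak continuity of $I_{f_\lambda}'$ that arises because we do not know $\int|Du_n|^2\to\int|Du|^2$, and it replaces condition $(f_2)$ (unavailable for $p\in[2,4)$) used in Lemma 3.2(i).

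Next I would set $v_n=u_n-u$ and run the Brezis--Lieb machinery: Lemma 3.8 for the $L^6$ term, orthogonality of weak limits for the Hilbert norm, and the compactness above for $\int f_\lambda|u_n|^p\to\int f_\lambda|u|^p$. Writing $D_0=\int_{\R^3}|Du|^2$, $b_1=\lim\int|Dv_n|^2$, $b_2=\lim\int|v_n|^2$, $b_3=\lim\int|v_n|^6$ (so $A^2=D_0+b_1$), I would compare $\langle I_{f_\lambda}'(u_n),u_n\rangle\to0$ with $\langle J_{f_\lambda}'(u),u\rangle=0$ to obtain the mass relation $b_3=(a+bA^2)b_1+b_2$, and then combine $I_{f_\lambda}(u_n)\to c_\lambda$ with $J_{f_\lambda}'(u)=0$ to eliminate $\int|u|^6$ and reach the clean identity
\[
c_\lambda=\frac{a}{3}A^2+\frac{b}{12}A^4+\frac13\int_{\R^3}|u|^2+\frac{b_2}{3}+\left(\frac16-\frac1p\right)\int_{\R^3}f_\lambda(x)|u|^p .
\]

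I then argue by contradiction, assuming $v_n\not\to0$ in $H^1(\R^3)$, i.e. $b_1>0$. The Sobolev inequality \eqref{3.2} gives $b_1\ge Sb_3^{1/3}$, which together with $b_3\ge(a+bb_1)b_1$ forces $b_1\ge b_1^*:=\frac{bS^3+\sqrt{b^2S^6+4aS^3}}{2}$ (the same threshold produced through Lemma 3.6 in the proof of Lemma 3.7, and one checks $\frac{a}{3}b_1^*+\frac{b}{12}(b_1^*)^2=c^*$, tying it to \eqref{1.10}). Since $A^2\ge b_1\ge b_1^*$, the first two terms of the identity already exceed $c^*$. For the sign-changing term I would use $\int f_\lambda|u|^p\le\lambda\int f^+|u|^p\le\lambda|f^+|_{p_*}S^{-p/2}D_0^{p/2}$, and \emph{crucially} the sharp constraint $D_0=A^2-b_1\le A^2-b_1^*=:\tau$ rather than the crude $D_0\le A^2$. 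Dropping the nonnegative terms $\frac13\int|u|^2+\frac{b_2}{3}$ and using $\frac{a}{3}A^2+\frac{b}{12}A^4-c^*\ge\frac{b}{12}\tau^2$, Young's inequality applied to $\frac{b}{12}\tau^2-\frac{6-p}{6p}\lambda|f^+|_{p_*}S^{-p/2}\tau^{p/2}$, whose minimum over $\tau\ge0$ is exactly $-C_0\lambda^{4/(4-p)}$, yields $c_\lambda\ge c^*-C_0\lambda^{4/(4-p)}$. This contradicts Lemma 3.4 for $\lambda\in(0,\lambda^*)$. Hence $b_1=0$; the mass relation and Sobolev then force $b_2=b_3=0$, so $u_n\to u$ strongly and $(PS)_{c_\lambda}$ holds.

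The hard part will be the third paragraph: making the nonlocal quartic $\frac{b}{12}A^4$ and the sign-changing perturbation cooperate so that the energy deficit is \emph{precisely} $C_0\lambda^{4/(4-p)}$, matching the level $c_2$ of Lemma 3.4. The decisive subtlety is the refined bound $D_0\le A^2-b_1^*$: with only $D_0\le A^2$ the Young step degrades to $c_\lambda\ge-C_0\lambda^{4/(4-p)}$, losing the entire $c^*$ and destroying the contradiction. A secondary technical point is the derivation of the identity itself, which relies on freezing to $J_{f_\lambda}$ precisely because $I_{f_\lambda}'(u)=0$ cannot be asserted directly; verifying $\frac{a}{3}b_1^*+\frac{b}{12}(b_1^*)^2=c^*$ (equivalently reconciling $b_1^*$ with $t_0,s_0$ of Lemma 3.6) is the algebraic heart linking the concentration threshold to $c^*$.
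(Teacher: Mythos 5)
Your proof is correct, and its skeleton coincides with the paper's: freeze the nonlocal coefficient to get $J_{f_\lambda}'(u)=0$, run the Brezis--Lieb decomposition $u_n=u+v_n$, and show that $v_n\not\to 0$ forces $c_\lambda\ge c^*-C_0\lambda^{4/(4-p)}$, contradicting Lemma 3.4. Where you genuinely differ is in how the two contributions $c^*$ and $-C_0\lambda^{4/(4-p)}$ are produced. The paper keeps the energies of $u$ and of the bubble separate: it proves $J_{f_\lambda}(u)\ge -C_0\lambda^{4/(4-p)}+\frac{bA^2}{4}\int_{\R^3}|Du|^2$ (its inequality (3.28), where Young's inequality is applied in the variable $D_0=\int_{\R^3}|Du|^2$, the quadratic gain $\frac{b}{12}D_0^2$ being peeled off the cross term $\frac{bA^2}{3}D_0$ via $A^2\ge D_0$), sends the bubble pair $(l_1,l_2)$ to $c^*$ through the two-variable system of Lemma 3.6, and recombines the cross terms into $\frac{bA^4}{4}$ on both sides. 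You instead collapse everything into the single exact identity $c_\lambda=\frac{a}{3}A^2+\frac{b}{12}A^4+\frac13\int_{\R^3}|u|^2+\frac{b_2}{3}+\left(\frac16-\frac1p\right)\int_{\R^3}f_\lambda(x)|u|^p$ (which checks out), replace Lemma 3.6 by the elementary one-variable inequality $b_1^2-bS^3b_1-aS^3\ge 0$ giving $b_1\ge b_1^*$ (and indeed $\frac{a}{3}b_1^*+\frac{b}{12}(b_1^*)^2=c^*$; this reconciles with Lemma 3.6 because $t_0=ab_1^*$ and $s_0=b(b_1^*)^2$), then apply Young in the excess variable $\tau=A^2-b_1^*$ under the refined constraint $D_0\le\tau$. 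Your route buys a more transparent, purely scalar computation that bypasses both Lemma 3.6 and the paper's (3.28); its price is exactly the subtlety you flag, namely that without $D_0\le A^2-b_1^*$ the bound degrades to $c^*-O(\lambda)$, which does not contradict Lemma 3.4 since $\lambda\gg\lambda^{4/(4-p)}$ for small $\lambda$ --- a refinement the paper never needs, because there the penalty is charged to $u$ alone (in the variable $D_0$) while the bubble's contribution $c^*$ is kept intact. Both computations produce exactly the same constant $C_0$, so the contradiction is equally sharp.
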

\begin{proof}~~
Suppose that $\{u_n\}$ is a $(PS)_{c_\lambda}$ sequence for
$I_{f_\lambda}$, by Lemma 3.2 (ii), we see that $\{u_n\}$ is bounded in $H^1(\R^3)$. Then there exists a $u\in
H^1(\R^3)$ and $A\in\R$ such that
\begin{equation}\label{3.4}
u_n\rightharpoonup u~~\hbox{in}~H^1(\R^3),
 \end{equation}
$$u_n(x)\rightarrow u(x)~~a.e.~\hbox{in}~\R^3$$
and
\begin{equation}\label{3.5}
\ds\int_{\R^3}|Du_n|^2\rightarrow A^2.\end{equation}
Then we have that
$$\ds\int_{\R^3}(aDuD\varphi+u\varphi)+bA^2\ds\int_{\R^3}DuD\varphi-\ds\int_{\R^3}f_\lambda(x)|u|^{p-1}u\varphi-\ds\int_{\R^3}u^5\varphi=0,~~\forall~\varphi\in H^1(\R^3)$$
i.e. $J_{f_\lambda}^\prime(u)=0$, where $$J_{f_\lambda}(u)=\frac{a+bA^2}{2}\ds\int_{\R^3}|Du|^2+\frac{1}{2}\ds\int_{\R^3}|u|^2-\frac{1}{p}\ds\int_{\R^3}f_{\lambda}(x)|u|^{p}-\frac{1}{6}\ds\int_{\R^3}|u|^6.$$
By \eqref{3.4}\eqref{3.5}, We see that $\{u_n\}$ is a bounded $(PS)_{c_\lambda+\frac{bA^4}{4}}$ sequence of $J_{f_\lambda}$ and
\begin{equation}\label{3.28}
\begin{array}{ll}
 J_{f_\lambda}(u)&=J_{f_\lambda}(u)-\frac{1}{6}\langle J_{f_\lambda}^\prime(u),u\rangle\\[5mm]
&\geq \frac{1}{3}\|u\|^2+\frac{b}{12}|Du|_2^4-\lambda\frac{6-p}{6p}|f^+|_{p_*}S^{-\frac{p}{2}}|Du|_2^p+\frac{bA^2}{4}\ds\int_{\R^3}|Du|^2\\[5mm]
&\geq-\frac{4-p}{4}\left(\frac{3p}{b}\right)^{\frac{p}{4-p}}\left(\lambda\frac{6-p}{6p}\frac{|f^+|_{p_*}}{S^{\frac{p}{2}}}\right)^{\frac{4}{4-p}}+\frac{bA^2}{4}\ds\int_{\R^3}|Du|^2.
\end{array}
\end{equation}

Set $v_n=u_n-u$, by \eqref{3.4}\eqref{3.5}, $f\in L^{p_*}(\R^3)$ and Lemma 3.8, we have that
$$\ds\int_{\R^3}|u_n|^2=\ds\int_{\R^3}|v_n|^2+\ds\int_{\R^3}|u|^2+o(1),~~~~\ds\int_{\R^3}|u_n|^6=\ds\int_{\R^3}|v_n|^6+\ds\int_{\R^3}|u|^6+o(1),$$
\begin{equation}\label{3.27}
A^2+o(1)=\ds\int_{\R^3}|Du_n|^2=\ds\int_{\R^3}|Dv_n|^2+\ds\int_{\R^3}|Du|^2+o(1)\end{equation}
and
$$\ds\int_{\R^3}f_\lambda(x)|u_n|^p=\ds\int_{\R^3}f_{\lambda}(x)|u|^p+o(1),$$
where $o(1)\rightarrow0$ as $n\rightarrow\infty$.
Then we have that
$$\begin{array}{ll}
o(1)&=\langle
J_{f_\lambda}^\prime(u_n),u_n\rangle\\[5mm]
&=\langle
J_{f_\lambda}^\prime(u),u\rangle+(a+bA^2)\ds\int_{\R^3}|Dv_n|^2+\ds\int_{\R^3}|v_n|^2-\ds\int_{\R^3}|v_n|^6+o(1)\\[5mm]
&=(a+bA^2)\ds\int_{\R^3}|Dv_n|^2+\ds\int_{\R^3}|v_n|^2-\ds\int_{\R^3}|v_n|^6+o(1)\\[5mm]
&=\|v_n\|^2+b\left(\ds\int_{\R^3}|Dv_n|^2\right)^2+b\ds\int_{\R^3}|Dv_n|^2\ds\int_{\R^3}|Du|^2-\ds\int_{\R^3}|v_n|^6+o(1).
\end{array}$$
Hence
\begin{equation}\label{3.24}
\|v_n\|^2+b\ds\int_{\R^3}|Dv_n|^2\ds\int_{\R^3}|Du|^2+b\left(\ds\int_{\R^3}|Dv_n|^2\right)^2-\ds\int_{\R^3}|v_n|^6=o(1).
\end{equation}
Up to a subsequence, we may assume that there exists $l_i\geq0$
$(i=1,\cdots,3)$ such that
$$\|v_n\|^2\rightarrow
l_1,~~~b\left(\ds\int_{\R^3}|Dv_n|^2\right)^2+b\ds\int_{\R^3}|Dv_n|^2\ds\int_{\R^3}|Du|^2\rightarrow
l_3,~~~\ds\int_{\R^3}|v_n|^6\rightarrow l_3,$$ then $l_1+l_2=l_3.$ If $l_1>0$,
then $l_3>0$. By \eqref{3.27}\eqref{3.24}, we see that
$$\begin{array}{ll}
J_{f_\lambda}(u_n)&=J_{f_\lambda}(u)+\frac{a+bA^2}{2}\ds\int_{\R^3}|Dv_n|^2+\frac{1}{2}\ds\int_{\R^3}|v_n|^2-\frac{1}{6}\ds\int_{\R^3}|v_n|^6+o(1)\\[5mm]
&=J_{f_\lambda}(u)+\frac{1}{2}\|v_n\|^2+\frac{b}{4}\left(\ds\int_{\R^3}|Dv_n|^2\right)^2+\frac{b}{4}\ds\int_{\R^3}|Dv_n|^2\ds\int_{\R^3}|Du|^2-\frac{1}{6}\ds\int_{\R^3}|v_n|^6\\[5mm]
&~~~~~~+\frac{bA^2}{4}\ds\int_{\R^3}|Dv_n|^2+o(1)\\[5mm]
&= J_{f_\lambda}(u)+\frac{1}{3}\|v_n\|^2+\frac{b}{12}\left[\left(\ds\int_{\R^3}|Dv_n|^2\right)^2+\ds\int_{\R^3}|Dv_n|^2\ds\int_{\R^3}|Du|^2\right]+\frac{bA^2}{4}\ds\int_{\R^3}|Dv_n|^2+o(1).
\end{array}
$$
Then let $n\rightarrow\infty$, we have that
$${c_\lambda+\frac{bA^4}{4}}\geq J_{f_\lambda}(u)+\frac{1}{3}l_1+\frac{1}{12}l_2+\frac{bA^2}{4}\lim\limits_{n\rightarrow\infty}\ds\int_{\R^3}|Dv_n|^2.$$
By \eqref{3.28}\eqref{3.27} and Lemma 3.6, similar to the proof of
\eqref{3.23}, we see that
$$\begin{array}{ll}
c_\lambda+\frac{bA^4}{4}&\geq J_{f_\lambda}(u)+\frac{1}{3}l_1+\frac{1}{12}l_2+\frac{bA^2}{4}\lim\limits_{n\rightarrow\infty}\ds\int_{\R^3}|Dv_n|^2\\[5mm]
&\geq-\frac{4-p}{4}\left(\frac{3p}{b}\right)^{\frac{p}{4-p}}\left(\lambda\frac{6-p}{6p}\frac{|f^+|_{p_*}}{S^{\frac{p}{2}}}\right)^{\frac{4}{4-p}}+c^*+\frac{bA^2}{4}\lim\limits_{n\rightarrow\infty}\left(\ds\int_{\R^3}|Du|^2+\ds\int_{\R^3}|Dv_n|^2\right)\\[5mm]
&\triangleq-C_0\lambda^{\frac{4}{4-p}}+c^*+\frac{bA^4}{4},
\end{array}$$
i.e.$$c_\lambda\geq -C_0\lambda^{\frac{4}{4-p}}+c^*,$$
which contradicts Lemma 3.4. So
$l_1=0$, i.e. $\|v_n\|^2\rightarrow0$, hence $u_n\rightarrow
u~~\hbox{in}~H^1(\R^3).$

\end{proof}

\section{Proof of Theorem 1.3 and 1.4}
\noindent $\textbf{Proof of Theorem 1.3}$\,\,\
\begin{proof}~~ We complete the proof in two steps.

\textbf{Step~1}~~~By Lemmas 3.1 and 3.2, there exists a bounded
$(PS)_c$ sequence $\{u_n\}\subset H^1(\R^3)$ for $I$. By Lemmas 3.3
and 3.7, there exists a sequence
$\{y_n\}\subset \R^3$ and constants $R,~\sigma>0$ such that
$\liminf\limits_{n\rightarrow\infty}\int_{B_R(y_n)}|u_n|^2\geq\sigma>0.$

Set $v_n(\cdot)=u_n(\cdot+y_n)$. Using the invariance of
$\R^3$ by translations, we see that $\{v_n\}$ is a bounded $(PS)_c$
sequence and
\begin{equation}\label{4.1}
\liminf_{n\rightarrow\infty}\ds\int_{B_R(0)}|v_n|^2\geq\sigma>0.
\end{equation}
By Lemma 3.2, there exists a $v\in H^1(\R^3)$ such that
$v_n\rightharpoonup v$ in $H^1(\R^3)$ and $I^\prime(v)=0$.
Furthermore, by \eqref{4.1}, we have that
$\int_{B_R(0)}|v|^2\geq\sigma>0.$
Therefore, $v$ is a nontrivial critical point for $I$. By Lemma 3.2, we have that
\begin{equation}\label{4.2}
\ds\int_{\R^3}|Dv_n|^2\rightarrow\ds\int_{\R^3}|Dv|^2.
\end{equation}
Then by \eqref{3.2},
\begin{equation}\label{4.3}
v_n\rightarrow v~~~~\hbox{in}~~L^6(\R^3).
\end{equation}
 So by $(f_1)(f_3)$, for any $\varepsilon>0$, there is $C_\varepsilon>0$ such that $|f(t)|\leq \varepsilon |t|+C_\varepsilon|t|^5$. Then by \eqref{4.3} and $v_n\rightharpoonup v$ in $H^1(\R^2)$, we see that
 \begin{equation}\label{4.5}
 \ds\int_{\R^3}|f(v_n)-f(v)||v_n-v|\leq\varepsilon(|v_n|_2+|v|_2)|v_n-v|_2+C_\varepsilon(|v_n|_6^5+|v|_6^5)|v_n-v|_6\rightarrow0.
\end{equation}
Then by \eqref{4.2}-\eqref{4.5}, $I^\prime(v_n)\rightarrow0$ in $H^{-1}(\R^3)$ and $I^\prime(v)=0$, we see that
$$v_n\rightarrow v~~~~\hbox{in}~~H^1(\R^3).$$
Therefore, $v$ is a nontrivial critical point of $I$ and $I(v)=c$.

\textbf{Step~2}~~~We next show that \eqref{1.1} has a positive
ground state solution in $H^1(\R^3)$.

Set $M=\{u\in H^1(\R^3)\backslash\{0\}|~I^\prime(u)=0\}$. We see
that $v\in M$, then $M\neq\varnothing$. For each $u\in M$, by
$(f_4)$,
$$I(u)=I(u)-\frac{1}{4}\langle
I^\prime(u),u\rangle=\frac{1}{4}\|u\|^2+\ds\int_{\R^3}H(u)+\frac{1}{12}\ds\int_{\R^3}|u|^6>0,
$$where $H(u)=\frac{1}{4}f(u)u-F(u).$

Let $m=\inf\limits_{u\in M}I(u)$, then
\begin{equation}\label{4.6}
0< m\leq I(v)=c<c^*.
\end{equation}
 Choosing a minimizing sequence
$\{u_n\}\subset M$ for $m$, i.e. $I(u_n)\rightarrow m$ and
$I^\prime(u_n)=0$, then $\{u_n\}$ is a $(PS)_m$ sequence for $I$ with $0<m<c^*$, then similar to the proof in
\textbf{Step~1}, there exists a nontrivial $u\in H^1(\R^3)$
satisfying $I^\prime(u)=0$ and $I(u)=m$, i.e. $u\in
H^1(\R^3)$ is a nontrivial ground state solution of \eqref{1.1}.

If we replace $I$ with the following functional
$$I^+(u)=\frac{1}{2}\ds\int_{\R^3}(a|Du|^2+|u|^2)+\frac{b}{4}\left(\ds\int_{\R^3}|Du|^2\right)^2-\ds\int_{\R^3}F(u)-\frac{1}{6}\ds\int_{\R^3}(u^+)^{6},
$$
where $u^\pm=\max\{\pm u,0\},$ then we see that all the calculations above can be repeated word by word. So there exists a nontrivial ground state critical point $u\in H^1(\R^3)$ of $I^+$. $\langle I^+(u),u^-\rangle=0$ implies that $\|u^-\|=0$, i.e. $u^-=0$, hence $u\geq0$ is a ground state solution of \eqref{1.1}. By using the strong
maximum principle and standard arguments, see e.g.
\cite{be,li,m,to,tr}, we see that $u(x)>0$, $\forall~x\in\R^3$.
Therefore, $u$ is a positive ground state solution of \eqref{1.1}
and the proof is completed.
\end{proof}

\noindent $\textbf{Proof of Theorem 1.4}$\,\,\
\begin{proof}~~
Theorem 1.4 is a direct consequence of Lemmas 3.1, 3.2, 3.4 and 3.9.
\end{proof}




\begin{thebibliography}{99}
\addcontentsline{toc}{section}{\protect \heiti 参考文献} 
\bibitem{ac}
C. O. Alves, F. J. S. A. Crrea, On existence of solutions for a
class of problem involving a nonlinear operator, Comm. Appl.
Nonlinear Anal. 8 (2001) 43-56.

\bibitem{af}
C. O. Alves, G. M. Figueiredo, On multiplicity and concentration of
positive solutions for a class of quasilinear problems with critical
exponential growth in $\R^N$, J. Differential Equations 246 (2009)
1288-1311.

\bibitem{af2}
C. O. Alves, G. M. Figueiredo, Nonlinear perturbations of peiodic
Kirchhoff equation in $\R^N$, Nonlinear Anal. 75 (2012)2750-2759.

\bibitem{ar}
A. Ambrosetti, P. H. Rabinowitz. Dual variational methods in
critical point theory and applications, J. Funct. Anal. 14 (1973)
349-381.

\bibitem{ds}
P. D'Ancona, S. Spagnolo, Global solvability for the degenerate
Kirchhoff equation with real analytic data, Invent. Math. 108 (1992) 247-262.

\bibitem{am}
D. Andrade, T. F. Ma, An operator suggested by a class of nonlinear
stationary problems, Comm. Appl. Nonlinear Anal. 4 (1997) 65-71.

\bibitem{ap1}
A. Arosio, S. Panizzi, On the well-posedness of the Kirchhoff
string, Trans. Amer. Math. Soc. 348 (1) (1996) 305-330.

\bibitem{be}
E. D. Benedetto, $C^{1+\alpha}$ local regularity of weak solutions
of degenerate results elliptic equations, Nonlinear Anal. 7 (1983)
827-850.

\bibitem{b}
S. Bernstain, Sur une classe d'$\acute{e}$quations fonctionnelles
aux d$\acute{e}$riv$\acute{e}$es partielles, Bull. Acad. Sci. URSS.
S$\acute{e}$r. 4 (1940) 17-26.

\bibitem{bl}
H. Berestycki, P. L. Lions, Nonlinear scalar field equations I,
Arch. Rational Mech. Anal. 82 (1983) 313-346.

\bibitem{bn}
H. Brezis, L. Nirenberg, Positive solutions of nonlinear elliptic
problems involving critical Sobolev exponent, Comm. Pure Appl. Math.
36 (1983) 437-477.

\bibitem{bn2}
H. Brezis, L. Nirenberg, A minimization problem with critical
exponent and non zero data, in "symmtry in nature", Scuola Norm.
Sup. Pisa, (1989),129-140.

\bibitem{cd}
 M. M. Cavacanti, V. N. Domingos Cavalcanti, J. A. Soriano,
Global existence and uniform decay rates for the Kirchhoff-Carrier
equation with nonlinear dissipation, Adv. Differential Equations 6 (2001) 701-730.

\bibitem{e}
I. Ekeland, On the variational principle, J. Math. Anal. Appl. 17
(1974) 324-353.

\bibitem{fmm}
M. F. Furtado, L. A. Maia, E. S. Medeiros, Positive and nodal
solutions for a nonlinear Schr\"{o}dinger equation with indefinite
potential, Adv. Nonl. Stud. 8 (2008), 353-373.

\bibitem{hz}
X. M. He, W. M. Zou, Existence and concentration behavior of
positive solutions for a Kirchhoff equation in $\R^3$. J.
Differential Equations 252 (2012) 1813-1834.

\bibitem{jw}
J. H. Jin, X. Wu, Infinitely many radial solutions for
Kirchhoff-type problems in $\R^N$. J. Math. Anal. Appl. 369 (2001)
564-574.

\bibitem{k}
G. Kirchhoff, Mechanik, Teubner, Leipzig, 1883.

\bibitem{li}
G. B. Li, Some properties of weak solutions of nonlinear scalar
fields equation, Ann. Acad. Sci. Fenn. Math. 14 (1989) 27-36.

\bibitem{lls}
Y. Li, et al., Existence of a positive solution to Kirchhoff type
problems without compactness conditions, J. Differential Equations, 253 (2012) 2285-2294.

\bibitem{l}
J. L. Lions, On some questions in boundary value problems of
mathmatical phisics, in: Contemporary Development in Continuum
Mechanics and Partial Differential Equations, in: North-Holland
Math. Stud., vol.30, North-Holland, Amsterdam, New York, 1978,
pp. 284-346.

\bibitem{pl}
P. L. Lions, The concentration-compcatness principle in the calculus
of variations, The locally compact case., I. Ann. Inst. H.
Poincar\'{e} Anal. Non Lin\'{e}aire, 1 (1984), 109-145.

\bibitem{lh}
W. Liu, X. M. He, Multiplicity of high energy solutions for
superlinear Kirchhoff equations, J. Appl. Math. Comput. (2012) 39:
473-487.

\bibitem{m}
J. Morse, A New proof of de Giorgi's theorem concening the
regularity problem for elliptic differential equations, Comm. Pure
Appl. Math. 13 (1960) 457-468.

\bibitem{nt}
W. M. Ni, I. Takagi, On the shape of least energy solution to a
Neumann problem, Comm. Pure Appl. Anal. 44 (1991) 819-851.

\bibitem{p}
S. I. Poho\u{z}aev, A certain class of quasilinear hyperbolic
equations, Mat. Sb. (NS) 96 (138) (1975) 152-166, 168 (in Russian)


\bibitem{ta}
G. Tarantello, On nonhomogeneous elliptic involving critical Sobolev
exponent, Ann. Inst. H. Poincar\'{e} Anal. Non Lineair\'{e} 9 no.3
(1992), 281-304.

\bibitem{to}
P. Tolksdorf, Regularity for some general class of quasilinear
elliptic equations, J. Differential Equations 51 (1984) 126-150.

\bibitem{tr}
N. S. Trudinger, On Harnack type inequalities and their application
to quasilinear ellitpic equations, Comm. Pure Appl. Math. XX (1967)
721-747.

\bibitem{wt}
J. Wang et al., Multiplicity and concentration of positive solutions
for a Kirchhoff type problem with critical growth, J. Differential
Equations, 253 (2012) 2314-2351.

\bibitem{wi}
M. Willem,  Minimax theorems, Progress in Nonlinear Differential
Equations and their Applications, 24. Birkh\"{a}user Boston, Inc.,
Boston, MA, 1996.

\bibitem{w}
X. Wu, Existence of nontrivial solutions and high energy solutions
for Schr\"{o}dinger-Kirchhoff-type equations in $\R^N$, Nonlinear
Anal.: Real World Applications 12 (2011) 1278-1287.



\end{thebibliography}
 \end{document}